\DeclareMathOperator{\supp}{supp}
\DeclareMathOperator{\IP}{IP}
\newcommand{\N}{\mathbb N}
\mathchardef\mhyphen="2D
\newtheorem{lemma}{Lemma}[section]
\newtheorem{theorem}{Theorem}[section]
\newtheorem{proposition}{Proposition}[section]
\newtheorem{definition}{Definition}[section]
\newtheorem{question}{Question}[section]
\newtheorem{example}{Example}[section]
\title{Transfinite Approximations to Hindman's Theorem}
\author[Mathias Beiglb\"ock and Henry Towsner]{Mathias Beiglb\"ock and Henry Towsner\thanks{The first author acknowledges financial support from the Austrian Science Fund (FWF) under grant P21209.}}
\date{\today}
\begin{document}
\maketitle

\begin{abstract}
Hindman's Theorem states that in any finite coloring of the integers, there is an infinite set all of whose finite sums belong to the same color. This is much stronger than the finite analog stating that for any $n,r$, there is a $k$ such that for any $r$-coloring of $[1,k]$, there is a set of $n$ integers all of whose finite sums belong to the same color.

We extend the finite form of Hindman's Theorem to an $\alpha$-Hindman Theorem for each countable ordinal $\alpha$. These $\alpha$-statements approximate Hindman's Theorem in the sense that the full fledged theorem is equivalent to the transfinite version holding for every countable ordinal. 
 
We then give a proof of Hindman's Theorem by directly proving these transfinite approximations.
\end{abstract}

\section{Introduction}

It will be convenient for us to take $\N$ to be the set of positive integers.  (That is, $0\not\in \N$.)  Also, ``integer'' means positive integer throughout.

\begin{definition}
  If $\sigma$ is a (finite or infinite) set of integers, we write
\[FS(\sigma):=\big\{n_1+ \ldots + n_k \mid   \{n_1< \ldots< n_k\}\subseteq\sigma\big\},\] the set of non-empty finite sums from $\sigma$. $A\subseteq \N$ is an $\IP$-set if there exists  there is an infinite set $\tau$ with $FS(\tau)\subseteq A$, and it is an $\IP_n$-set if there exists such a $\tau$ with $|\tau|=n$.
\end{definition}

We recall  Hindman's Theorem (\cite{Hind74}) and its finitary version, Folkman's Theorem.\footnote{The finitary version of Hindman's Theorem is baptized after Folkman in \cite{GrRS80} and we follow this tradition. It is a special case of Rado's Theorem \cite{Rado33} and was proved independently (but much later than Rado's Theorem) by  Folkman (unpublished) and Sanders \cite{Sand69}.}

\begin{theorem}[Hindman's Theorem]\label{HindmanTheorem}
For any finite coloring $c:\mathbb N\to [1,r]$ of the positive integers there exists a monochromatic $\IP$-set. 
\end{theorem}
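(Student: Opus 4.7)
The plan is to follow the strategy advertised in the abstract: introduce a transfinite hierarchy of finitary ``$\alpha$-Hindman'' theorems that interpolate between Folkman's Theorem and Theorem~\ref{HindmanTheorem}, prove each by induction on the countable ordinal $\alpha$, and then pass from the family of finitary statements to the original infinitary one.

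First, I would assign to each countable ordinal $\alpha$ a notion of \textbf{$\alpha$-configuration}: a finite tree of integers, well-founded of rank at most $\alpha$, whose branches encode nested $\IP_n$-sets all of whose finite sums land in a single color class. For $\alpha$ finite this collapses to an ordinary $\IP_n$-set, recovering Folkman's Theorem; as $\alpha$ grows the configurations become increasingly rich approximations to a genuine $\IP$-set. The $\alpha$-Hindman Theorem then states that for every $r$ there is a finite $k = k(\alpha,r)$ such that every $r$-coloring of $[1,k]$ admits a monochromatic $\alpha$-configuration.

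The second step is the transfinite induction. The base case $\alpha = 1$ is pigeonhole. The successor step is a bootstrap: given the $\alpha$-theorem on a sufficiently long initial segment, extract a first generator $n_1$ sitting at the root of a monochromatic $\alpha$-configuration, recolor the complement by pairs $(c(m), c(m + n_1))$, and recurse. The limit step --- which I expect to be the main obstacle --- requires gluing, for $\alpha = \sup_n \alpha_n$, a sequence of finitary $\alpha_n$-witnesses into a single coherent $\alpha$-configuration, with careful control of the growth of $k(\alpha_n, r)$ and a diagonal/Ramsey-type selection that keeps the newly formed finite sums monochromatic.

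Finally, to deduce Theorem~\ref{HindmanTheorem}, I argue by contradiction. If some coloring $c : \N \to [1,r]$ admitted no monochromatic $\IP$-set, then the tree of finite monochromatic configurations that are (potentially) extendible to such an $\IP$-set in a single color would be well-founded with some countable rank $\alpha$; but the $\alpha$-Hindman Theorem applied to a long enough initial segment of $c$ produces a monochromatic configuration of rank $\alpha$, contradicting well-foundedness. The successor step is essentially a relativized Folkman argument, and the contradiction step is a clean compactness observation, so the technical weight of the proof rests on the limit case of the transfinite induction.
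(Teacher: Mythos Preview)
There is a genuine gap in the setup of your transfinite hierarchy. You propose that the $\alpha$-Hindman Theorem should assert the existence of a \emph{finite} $k = k(\alpha,r)$ such that every $r$-coloring of $[1,k]$ contains a monochromatic $\alpha$-configuration. But an $\alpha$-configuration, for infinite $\alpha$, cannot be a finite object living inside $[1,k]$: a finite well-founded tree has finite rank, so ``rank at most $\alpha$'' becomes vacuous once $\alpha \geq \omega$ and your hierarchy collapses there. The paper's resolution is to make both input and output genuinely infinite: an $\IP_\alpha$-set is defined recursively ($A$ is $\IP_{\alpha+1}$ if some $a \in A$ has $A \cap (A-a)$ an $\IP_\alpha$-set; $A$ is $\IP_\lambda$ iff it is $\IP_\beta$ for all $\beta < \lambda$), and the $\alpha$-Folkman Theorem asserts that for each $\alpha < \omega_1$ there is an \emph{ordinal} $\beta < \omega_1$ --- not a finite $k$ --- such that any finite coloring of an $\IP_\beta$-set contains a monochromatic $\IP_\alpha$-set. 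The deduction of Hindman's Theorem is then not by contradiction on a rank but by pigeonhole: $\N$ is $\IP_\beta$ for every $\beta$, so for each $\alpha$ some color class is $\IP_\alpha$, hence some fixed color class is $\IP_\alpha$ for cofinally many (and thus all) $\alpha < \omega_1$, hence $\IP$.

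A second issue is where you locate the difficulty. With the correct definition the limit case is trivial --- it \emph{is} the definition --- and all the work sits in the successor step, which is not the simple product-coloring bootstrap you sketch. The paper exhibits an explicit $2$-coloring of an $\IP_{\omega^3}$-set with no monochromatic $\IP_{\omega^2+1}$-subset, so $\beta(\alpha)$ must grow rapidly and a one-generator recursion cannot suffice. The actual inductive engine is a sequence of lemmas built around ``half-matching'' and ``full-matching'' of a finite set $\sigma$ against a tree (for every $n$ in the tree some $m \in FS(\sigma)$ has $c(n)=c(n+m)$, respectively $c(n)=c(n+m)=c(m)$), obtained by unwinding an ultrafilter-style proof; these drive iterated ordinal exponentiation in the bound rather than anything resembling a Folkman-type compactness on an initial segment.
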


\begin{theorem}[Folkman's Theorem]\label{FolkmanTheorem}
For any finite coloring $c:\mathbb N\to [1,r]$ of the positive integers and for every $n>0$ there exists a monochromatic $\IP_n$-set. 
\end{theorem}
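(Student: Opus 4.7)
The plan is to derive Folkman's Theorem directly from Hindman's Theorem (Theorem~\ref{HindmanTheorem}). Given a finite coloring $c:\mathbb{N}\to [1,r]$ and a positive integer $n$, Hindman's Theorem supplies an infinite set $\tau\subseteq\mathbb{N}$ together with a color $i\in[1,r]$ such that $FS(\tau)\subseteq c^{-1}(\{i\})$. I would then select any $n$-element subset $\sigma\subseteq\tau$. Every non-empty finite sum from $\sigma$ is also a non-empty finite sum from $\tau$, so $FS(\sigma)\subseteq FS(\tau)\subseteq c^{-1}(\{i\})$, exhibiting a monochromatic $\IP_n$-set.

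On this route there is essentially no obstacle: once the infinite version is in hand, restricting to any $n$-element subset yields the $\IP_n$-conclusion for free, and the entire combinatorial content of the statement is absorbed into Hindman's Theorem.

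The more interesting question---and, judging by the abstract, the one the paper actually pursues---is how to prove Theorem~\ref{FolkmanTheorem} (and its transfinite refinements) \emph{without} first invoking Hindman, so that Hindman can in turn be recovered as a limit of these finitary approximations. Such a direct argument would presumably proceed by induction on $n$, bootstrapping from the trivial case $n=1$ via a Ramsey-type argument that controls all $2^n-1$ non-empty partial sums of a candidate tuple $x_1<\dots<x_n$ simultaneously. Arranging this simultaneous control, and quantifying how large a window of integers is needed to guarantee a monochromatic $\IP_n$-set, is where I would expect the genuine difficulty to lie, and it is plausibly what the ordinal parameter $\alpha$ announced in the abstract is introduced to regulate.
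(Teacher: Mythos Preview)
Your derivation of Folkman's Theorem from Hindman's Theorem is correct and entirely standard. However, there is nothing to compare it against: the paper does not give its own proof of Theorem~\ref{FolkmanTheorem}. Folkman's Theorem is simply \emph{recalled} in the introduction as a known result (with citations to Rado, Folkman, and Sanders in the footnote), and is then used as an off-the-shelf ingredient---most notably in the proof of Lemma~\ref{UnmeshedFolkman}, where a compactness argument extracts a finite bound from it.

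Your second and third paragraphs read the situation accurately. The paper's actual work is in Section~\ref{HindmansTheoremSection}, which proves the ordinal version (Theorem~\ref{OrdinalEffectiveHindman}) directly, without invoking Folkman, and thereby yields Hindman's Theorem; Folkman then follows exactly as you describe. The Section~\ref{SquaredFolkmanSection} argument, by contrast, is a warm-up that \emph{assumes} Folkman and bootstraps it to the $\omega^2$ level. So your instinct that ``the entire combinatorial content is absorbed into Hindman's Theorem'' is right for this particular statement, and your speculation about where the real difficulty lies is on target.
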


We establish Hindman's Theorem by proving a sequence of statements, indexed with the ordinals less than $\omega_1$ whose strength lies strictly below Hindman's Theorem.

The key objects are $\IP_\alpha$-sets (where $\alpha<\omega_1$) which ``interpolate'' between finite $\IP_n$-sets and full fledged $\IP$-sets.  (This idea was inspired by \cite{Hir04}.)  To motivate the inductive definition, we note that a set $A$ is an $\IP_{n+1}$-set if and only if there exists an integer $a\in A$ such that $A\cap (A-a)$ is an $\IP_n$-set.

\begin{definition}
  Every nonempty set  (of positive integers) is $\IP_1$. If there is an $a\in A$ such that $A\cap (A-a)$ is $\IP_\alpha$ then $A$ is $\IP_{\alpha+1}$.  If for every $\beta<\lambda$ $A$ is $\IP_\beta$ then $A$ is $\IP_\lambda$.
\end{definition}  

An $\IP_\omega$-set simply contains $\IP_n$-sets for every $n\in \N$, but unlike in $\IP$-sets, there is no requirement that those finite sets are mutually related. More exotic are the $\IP_{\omega+1}$-sets which contain a single $a$ and for each $n$ all of $FS(a,b_1^n,\ldots, b_n^n)$ for some $b_1^n,\ldots, b_n^n$. Next, an $\IP_{\omega+k}$ set $A$ has a core $\{a_1,\ldots,a_k\}$ such that for each $n$ there exist $b_1^n,\ldots, b_n^n$  with $FS(a_1, \ldots, a_k,b_1^n,\ldots, b_n^n)\subseteq A.$ Similarly we find at step $\omega+\omega$, that for each $k$ there is size $k$ core and for each $n$ a size $n$ extension, all of whose finite sums are contained in the $\IP_{\omega+\omega}$-set. 

We depart from the increasingly difficult task of describing $\IP_\alpha$-sets and clarify the role of $\IP$-sets with respect to this hierarchy.
\begin{proposition}\label{IPRole}
A set $A\subseteq \N$ is an $\IP$-set iff it is an $\IP_{\omega_1}$-set.
\end{proposition}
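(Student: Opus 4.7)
The forward direction is a transfinite induction on $\alpha$: every $\IP$-set is $\IP_\alpha$. Given $A$ with witness $\tau = \{a_1 < a_2 < \ldots\}$, the base $\IP_1$ is clear; at a successor $\alpha+1$, take $a_1 \in A$ and note $A \cap (A - a_1) \supseteq FS(\{a_2, a_3, \ldots\})$, an $\IP$-set which by the inductive hypothesis is $\IP_\alpha$, so by monotonicity of $\IP_\alpha$ under inclusion (itself an easy induction) the superset $A \cap (A - a_1)$ is $\IP_\alpha$, whence $A$ is $\IP_{\alpha+1}$; the limit case is by definition.

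For the reverse direction the main step is the following lemma: if $B \subseteq \N$ is $\IP_{\omega_1}$ then there exists $b \in B$ such that $B \cap (B - b)$ is $\IP_{\omega_1}$. Set $S_\alpha := \{b \in B : B \cap (B - b) \text{ is } \IP_\alpha\}$; this is decreasing in $\alpha$, each $S_\alpha$ is nonempty because $B$ is $\IP_{\alpha+1}$, and each $S_\alpha \subseteq B$ is countable. If $\bigcap_{\alpha < \omega_1} S_\alpha$ were empty, each $b \in B$ would have a countable ``exit level'' $\sup\{\alpha : b \in S_\alpha\}$; by regularity of $\omega_1$ the supremum of these countably many countable ordinals is countable, forcing $S_\alpha = \emptyset$ for large $\alpha$, a contradiction. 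Any $b$ in the intersection witnesses the lemma.

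Iterating produces sequences $a_1, a_2, \ldots \in A$ and nested sets $A_0 = A \supseteq A_1 \supseteq \cdots$ with $A_{n+1} = A_n \cap (A_n - a_{n+1})$, each $A_n$ still $\IP_{\omega_1}$. A routine induction shows $A_n = \bigcap_{I \subseteq \{1, \ldots, n\}} \bigl(A - \sum_{i \in I} a_i\bigr)$, so $a_{n+1} \in A_n$ translates into the key relation
\[
  a_{n+1} + \sum_{i \in I} a_i \in A \quad \text{for every } I \subseteq \{1, \ldots, n\}.
\]
To conclude I split on whether $(a_n)$ attains infinitely many distinct values. If it does, then for any $i_1 < \cdots < i_k$ the key relation at index $i_k$ with $I = \{i_1, \ldots, i_{k-1}\}$ gives $a_{i_1} + \cdots + a_{i_k} \in A$; hence the infinite set $\{a_n : n \in \N\}$ witnesses $A$ as an $\IP$-set. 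If instead $(a_n)$ takes only finitely many values, then some value $t$ is realized at an infinite set of positions $n_1 < n_2 < \cdots$; applying the key relation at $n_j$ with $I = \{n_1, \ldots, n_{j-1}\}$ yields $jt \in A$ for every $j$, so $t\N \subseteq A$ and the binary expansions make $\{t, 2t, 4t, \ldots\}$ an $\IP$-witness.

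The principal obstacle is the key lemma, whose proof rests on regularity of $\omega_1$ combined with countability of $B$ to promote ``a witness exists for every countable stage'' to ``a single witness works for all stages.'' The concluding case split is forced because the recursive definition of $\IP_\alpha$ never requires successive witnesses to be distinct or increasing, so the iteration may legitimately return the same element repeatedly; however, a repeated witness $t$ is itself sufficient since it forces the arithmetic progression $t\N \subseteq A$.
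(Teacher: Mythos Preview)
Your proof is correct and follows essentially the same strategy as the paper: both directions hinge on the same key lemma (an $\IP_{\omega_1}$-set $B$ contains some $b$ with $B\cap(B-b)$ still $\IP_{\omega_1}$), proved in both cases by taking the supremum of countably many countable ``failure'' ordinals.  Your forward direction is a direct transfinite induction where the paper uses the contrapositive via infinite descent, but these are equivalent.

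The one substantive difference is your final case split on whether the sequence $(a_n)$ takes infinitely many values.  The paper simply writes ``Iterating, we obtain a sequence $\{a_i\}$ such that $FS(\{a_i\})\subseteq A$,'' tacitly assuming the $a_i$ form an infinite set.  You are right that the recursive definition of $\IP_\alpha$ does not force the successive witnesses to be distinct, so this is a genuine (if minor) gap in the paper's presentation that your argument fills.  Your treatment of the ``finitely many values'' case---extracting a repeated value $t$, deducing $t\N\subseteq A$, and then exhibiting $\{t,2t,4t,\ldots\}$ as an $\IP$-witness via binary expansions---is a clean way to close it.  An alternative patch would be to argue that one can always choose $a_{n+1}$ larger than $\max\{a_1,\ldots,a_n\}$, but that requires its own small argument, so your case split is a reasonable choice.
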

\begin{proof}
  First, note that if $A$ is $\IP_{\omega_1}$ then there is some $a_0\in A$ such that $A\cap (A-a_0)$ is $\IP_{\omega_1}$.  For if not, for every $a\in A$, there would be a $\gamma_a$ such that $A\cap (A-a)$ is not $\IP_{\gamma_a}$, and therefore we could take the supremum of these ordinals to find a $\gamma$ such that $A$ was not $\IP_{\gamma+1}$.
  
  So suppose $A$ is $\IP_{\omega_1}$.  Then we may choose $a_0\in A$ such that $A\cap (A-a_0)$ is $\IP_{\omega_1}$, and then $a_1\in A\cap (A-a_0)$ such that $A\cap (A-a_0)\cap (A-a_1)\cap (A-a_0-a_1)$ is $\IP_{\omega_1}$, and so on.  Iterating, we obtain a sequence $\{a_i\}$ such that $FS(\{a_i\})\subseteq A$.

Conversely, suppose $A$ were not $\IP_{\gamma_0}$ for some $\gamma_0$, but that $\{a_i\}$ were a sequence of integers such that $FS(\{a_i\})\subseteq A$.  Then $A\cap (A-a_0)$ is not $\IP_{\gamma_1}$ for some $\gamma_1<\gamma_0$, and $A\cap (A-a_0)\cap (A-a_1)\cap (A-a_0-a_1)$ is not $\IP_{\gamma_2}$ for some $\gamma_2<\gamma_1$.  Iterating, we obtain an infinite descending sequence of ordinals, which is a contradiction.
\end{proof}

A version of Folkman's Theorem asserts that for all $n$ and $r$ there is some $m$, such that whenever an $\IP_m$-set is $r$-colored, there exists a monochromatic $\IP_n$-set. (See the comment following Lemma \ref{UnmeshedFolkman}.) Remarkably, it is possible to prove an ordinal analogue of this statement which yields Hindman's Theorem. 
\begin{theorem}\label{AlphaFolkman} 
For any $\alpha<\omega_1$, there exists some $\beta <\omega_1$, such that for every  finite coloring of an $\IP_{\beta}$-set  there exists a monochromatic $\IP_{\alpha}$-set.  
\end{theorem}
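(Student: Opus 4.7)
I plan to prove, by transfinite induction on $\alpha$, the parameterized statement: for every $\alpha<\omega_1$ and every $r\in\N$ there is $\beta(\alpha,r)<\omega_1$ such that every $r$-coloring of an $\IP_{\beta(\alpha,r)}$-set contains a monochromatic $\IP_\alpha$-set. The base case $\alpha=1$ is trivial, with $\beta(1,r)=1$, since any nonempty set coloured into $r$ parts has a nonempty (hence $\IP_1$) monochromatic piece.

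For the limit case $\alpha=\lambda$, I would take $\beta(\lambda,r):=\sup_{\gamma<\lambda}\beta(\gamma,r)$, which is countable since $\lambda$ is. For each $\gamma<\lambda$ the induction hypothesis yields a monochromatic $\IP_\gamma$-subset, of some colour $i(\gamma)\in[1,r]$. Since only $r$ colours are available, some $i^*$ serves as $i(\gamma)$ for cofinally many $\gamma<\lambda$; thus the colour class $A_{i^*}$ contains an $\IP_\gamma$-subset, and hence is itself $\IP_\gamma$, for cofinally many $\gamma$. An easy induction shows that being $\IP_{\gamma'}$ implies being $\IP_\gamma$ whenever $\gamma\le\gamma'$, so $A_{i^*}$ is $\IP_\gamma$ for every $\gamma<\lambda$, i.e.\ $\IP_\lambda$.

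For the successor case $\alpha\to\alpha+1$, the useful reformulation is that a monochromatic $\IP_{\alpha+1}$-set of colour $i$ exists iff the colour class $A_i=c^{-1}(i)$ itself is $\IP_{\alpha+1}$. Concretely, given $a\in A$, an $\IP_\alpha$-set $B\subseteq A$ and a shift $B+a\subseteq A$ all of colour $i$, the union $C:=\{a\}\cup B\cup(B+a)$ is monochromatic, and $C\cap(C-a)\supseteq B$ is $\IP_\alpha$, so $C$ is $\IP_{\alpha+1}$. To produce such data, I would choose $\beta$ large enough that for some $a\in A$ the shift $A\cap(A-a)$ is $\IP_{\beta(\alpha,r^2)}$, then apply the induction hypothesis to that shift with the enriched $r^2$-colouring $c_a(x):=(c(x),c(x+a))$. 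This yields a monochromatic $\IP_\alpha$-set $B\subseteq A\cap(A-a)$ on which $c(b)=i$ and $c(b+a)=j$ are constants.

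The remaining obstacle --- and the main technical hurdle --- is to enforce $c(a)=i=j$. I expect to handle this either by iterating the construction over different candidate pivots $a$, tracking the resulting triples $(c(a),i,j)\in[1,r]^3$ and invoking pigeonhole to force a diagonal triple, or by strengthening the inductive hypothesis so that it already delivers a ``shift-compatible'' monochromatic $\IP_\alpha$-set matching the colour of a designated element. The delicate point in either approach is controlling the ordinal budget so that the resulting $\beta(\alpha+1,r)$ remains countable; I anticipate a recursion of the shape $\beta(\alpha+1,r)=\beta(\alpha,r')+k(r)$ for suitable finite $r'$ and $k(r)$, which is where the real combinatorial work of the proof lies.
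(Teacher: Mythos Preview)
Your base case and limit case are correct and straightforward. The real content of the theorem, as you yourself recognise, lies entirely in the successor step, and here your plan has a genuine gap rather than just unfinished detail.

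The anticipated recursion $\beta(\alpha+1,r)=\beta(\alpha,r')+k(r)$ with finite $k(r)$ is provably false. Theorem~\ref{SquaredFolkman} gives $\beta(\omega^2,r')\le\omega^2$ for every $r'$, yet Example~\ref{NoCubedFolkman} exhibits a $2$-colouring of an $\IP_{\omega^3}$-set with no monochromatic $\IP_{\omega^2+1}$-subset, so $\beta(\omega^2+1,2)>\omega^3$. Thus a single successor step can force $\beta$ to jump by more than multiplication by $\omega$. Neither the pigeonhole-over-pivots idea nor a naive strengthening of the inductive hypothesis bridges this: knowing that $B\subseteq A\cap(A-a)$ is $\IP_\alpha$ and monochromatic for $c_a$ gives no control over $c(a)$ without sacrificing arbitrarily much $\IP$-rank from the ambient set, and the way different pivots interact is precisely what makes Hindman-type results hard. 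Your plan has, in effect, reduced the theorem to its own hardest instance.

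The paper's proof proceeds along entirely different lines. It reformulates $\IP_\alpha$-sets via well-founded trees of finite sets, then introduces the notions of \emph{half-matching} (a set $\sigma$ such that every $n$ above has $c(n)=c(n+m)$ for some $m\in FS(\sigma)$) and \emph{full-matching} (additionally $c(m)=c(n)$). A chain of lemmas shows that trees of sufficient ordinal height contain subtrees of large half-matching height, then of large full-matching height, and finally that large full-matching height yields a large monochromatic subtree. The ordinal bookkeeping runs through three nested rapidly growing functions (iterated $\omega$-th powers, their iterates, and a further diagonalisation $f(\alpha,\beta)$), with the final bound $\beta=f(\alpha,\alpha)$. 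None of this is an induction on $\alpha$ of the shape you propose; the matching machinery is exactly what circumvents the successor obstruction you ran into.
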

To see that Theorem \ref{AlphaFolkman} yields Hindman's Theorem, fix a finite coloring of $\N$. By Proposition \ref{IPRole}, $\N$ is $\IP_{\omega_1}$. Thus, by Theorem \ref{AlphaFolkman}, there exists a monochromatic $\IP_\alpha$-set for each $\alpha<\omega_1$. By the pigeonhole principle, there exists a color class which is $\IP_\alpha$ for every $\alpha< \omega_1$, whence $\IP_{\omega_1}$, whence $\IP$.

Below we shall prove that the converse is true as well, i.e.\ a short argument makes it possible to derive Theorem \ref{AlphaFolkman} from Hindman's Theorem.   

Moreover we shall prove a version of Theorem \ref{AlphaFolkman} (Theorem \ref{OrdinalEffectiveHindman}) which is effective in the sense that $\beta=\beta(\alpha)$ is calculated explicitly from $\alpha$, thus providing a new proof of Hindman's Theorem.

While the function obtained in Theorem \ref{OrdinalEffectiveHindman} grows rapidly, there exist ``good bounds'' for the first infinite ordinals. By Folkman's Theorem, every finite coloring of an $\IP_\omega$-set admits a monochromatic $\IP_\omega$-set. The same behavior occurs at level $\omega^2$. 

\begin{theorem}\label{SquaredFolkman}
For any finite coloring of an $\IP_{\omega^2}$-set  there exists a monochromatic $\IP_{\omega^2}$-set.  
\end{theorem}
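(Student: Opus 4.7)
The plan is to prove, by induction on $m \in \N$, the ``self-similar'' Folkman statement $(\ast)_m$: every finite coloring of an $\IP_{\omega \cdot m}$-set admits a monochromatic $\IP_{\omega \cdot m}$-subset. The theorem is then extracted from $(\ast)_m$ for all $m$ via pigeonhole. Two monotonicity facts, shown by transfinite induction on $\alpha$, are used throughout: every superset of an $\IP_\alpha$-set is $\IP_\alpha$, and every $\IP_\beta$-set is $\IP_\alpha$ whenever $\alpha \leq \beta$.

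Granting $(\ast)_m$ for all $m$, the theorem follows at once: if $c$ is an $r$-coloring of an $\IP_{\omega^2}$-set $A$, then $A$ is $\IP_{\omega \cdot m}$ for each $m$, so $(\ast)_m$ provides a monochromatic $\IP_{\omega \cdot m}$-subset of some color $j_m \in [1,r]$. By pigeonhole one color $j$ recurs as $j_m$ for infinitely many $m$, and monotonicity promotes $A \cap c^{-1}(j)$ to $\IP_{\omega \cdot m}$ for every $m$, hence to $\IP_{\omega^2}$.

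The base case $(\ast)_1$ is Folkman's theorem for $\IP_\omega$-sets, recalled just before the statement. For the inductive step $(\ast)_m \Rightarrow (\ast)_{m+1}$, an analogous pigeonhole-on-$k$ reduces the task to showing: for each $k, r$, every $r$-coloring of an $\IP_{\omega \cdot (m+1)}$-set contains a monochromatic $\IP_{\omega \cdot m + k}$-subset. Given such $A$, use that $A$ is $\IP_{\omega \cdot m + M}$ for every $M$; take $M$ at least the finitary Folkman number $F(k, r)$ and extract a top core $\{a_1, \ldots, a_M\} \subseteq A$ whose residue $A' := A \cap \bigcap_{\emptyset \ne I \subseteq [M]}(A - \sum_{i \in I} a_i)$ is $\IP_{\omega \cdot m}$. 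Define on $A'$ the enriched $r^{2^M}$-coloring $\tilde c(y) := (c(y + \sum_{i \in I} a_i))_{I \subseteq [M]}$, apply $(\ast)_m$ to $\tilde c$ to produce a monochromatic $\IP_{\omega \cdot m}$-subset $T \subseteq A'$ on which $\tilde c \equiv (\gamma_I)_I$, and finally apply finitary Folkman to the $r$-coloring $I \mapsto \gamma_I$ of nonempty subsets of $[M]$ to obtain disjoint $I_1, \ldots, I_k \subseteq [M]$ with $\gamma_{\bigcup_{l \in J} I_l}$ equal to a common color $j^*$ for all nonempty $J \subseteq [k]$. Setting $b_l := \sum_{i \in I_l} a_i$, the set $T \cup \{b_1, \ldots, b_k\}$ then assembles into the desired monochromatic $\IP_{\omega \cdot m + k}$-subset.

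The principal difficulty is aligning the ``top'' color $j^*$ produced by finitary Folkman with the ``bottom'' color $\gamma_\emptyset$ of $T$. The fix is to choose $M$ large enough that finitary Folkman can be run inside the fibre of $I \mapsto \gamma_I$ over a prescribed color value, thereby forcing $j^* = \gamma_\emptyset$; equivalently, one iterates the construction inside the residue $A \cap \bigcap_J (A - \sum_{l \in J} b_l)$, a loop that terminates within $r$ rounds by pigeonhole. The resulting bound on $M$, while elementary, grows rapidly with $k$ and $r$, consistent with the remark preceding the theorem that good bounds persist through level $\omega^2$.
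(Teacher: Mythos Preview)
Your intermediate statement $(\ast)_m$ --- that every finite coloring of an $\IP_{\omega\cdot m}$-set admits a monochromatic $\IP_{\omega\cdot m}$-subset --- is false already for $m=2$, so the induction cannot get past the base case. Here is a concrete $2$-coloring of an $\IP_{\omega\cdot 2}$-set with no monochromatic $\IP_{\omega+1}$-subset (essentially the $\alpha=2$ instance of Example~\ref{NoCubedFolkman}). Choose pairwise ``independent'' generators $a_i^{(k)}$ ($1\le i\le k$) and $b_j^{(k,l)}$ ($1\le j\le l$), say distinct powers of $2$ spaced so that no sum of distinct generators coincides with another generator. Let
\[
A=\bigcup_{k,l} FS\bigl(a_1^{(k)},\ldots,a_k^{(k)},b_1^{(k,l)},\ldots,b_l^{(k,l)}\bigr),
\]
and color $x\in A$ by $1$ if the generators occurring in $x$ are either all $a^{(k)}$'s (for a single $k$) or all $b^{(k,l)}$'s (for a single $(k,l)$), and by $2$ otherwise. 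Then $A$ is $\IP_{\omega\cdot 2}$ (for each $k$ the core $a_1^{(k)},\ldots,a_k^{(k)}$ has $\IP_\omega$ residue $\bigcup_l FS(b^{(k,l)})$), but for every $x\in A$ the set $\{y: y,\,y+x\in A,\ c(y)=c(y+x)=c(x)\}$ is finite, so neither color class is even $\IP_{\omega+1}$.

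This example also shows why your proposed fixes cannot work. Run your inductive step with $m=1$ on this $A$: the only size-$M$ cores available are the $a^{(M)}$'s, the residue is $A'=\bigcup_l FS(b^{(M,l)})$, and on all of $A'$ the enriched coloring is already constant with $\gamma_\emptyset=1$ and $\gamma_I=2$ for every nonempty $I$. Thus the fibre of $I\mapsto\gamma_I$ over the value $\gamma_\emptyset$ is empty among nonempty $I$ --- no choice of $M$ lets you ``run Folkman inside that fibre''. And the residue $A\cap\bigcap_J(A-\sum_{l\in J}b_l)$ you propose to iterate inside is only $\IP_\omega$, not $\IP_{\omega\cdot 2}$, so there is nothing to iterate on.

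The paper's argument avoids this by \emph{not} aiming for the self-similar statement. Lemma~\ref{SimpleColoring} (proved by an induction on $m$ that superficially resembles yours) does not try to produce a monochromatic $\IP_{\omega\cdot m}$-set; it produces an $\IP_{\omega\cdot m}$-structure on which the color of every finite sum depends only on \emph{which} of the $m$ levels contribute, via a single coloring $\tilde c$ of $\mathcal P([1,m])\setminus\{\emptyset\}$. Since $\tilde c(\{i\})$ already records the color of the pure level-$i$ sums, the alignment problem never arises: one application of Lemma~\ref{UnmeshedFolkman} to $\tilde c$ yields blocks $\tau_1,\ldots,\tau_n$ all of whose unions --- singletons included --- share one color. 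The price is that to obtain a monochromatic $\IP_{\omega\cdot n}$-subset one must start from an $\IP_{\omega\cdot m(r,n)}$-set with $m(r,n)>n$; your $(\ast)_m$ would have forced $m(r,n)=n$, which the counterexample above rules out.
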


The proof of Theorem \ref{SquaredFolkman} turns out to be rather simple from a conceptual point of view; indeed it does not require more than repeated applications of Folkman's Theorem and the pigeonhole principle.\footnote{It might be interesting to note that
while Theorem \ref{SquaredFolkman} 
doesn't require new combinatorial arguments, it extends Folkman's Theorem even on a completely finitistic level. For instance it implies a ``Paris-Harrington''-type Folkman's Theorem.} 

Unfortunately we do not know if a similar technique is applicable for higher ordinals. A straightforward generalization would suggest that $\IP_{\omega^3}$-sets are partition regular as well, but this fails badly. Instead is possible to two-color an $\IP_{\omega^3}$-set, such that there is no monochrome $\IP_{\omega^2+1}$-set (see Example \ref{NoCubedFolkman}). 

The paper is organized as follows. Section \ref{SquaredFolkmanSection} is a warm up section in which we give the proof of Theorem \ref{SquaredFolkman}.  In Section \ref{TreeDefinitionSection} we describe $\IP_\alpha$-sets in terms of trees and prove that Hindman's Theorem is equivalent to a claim about $\IP_\alpha$-sets.  Finally, in Section \ref{HindmansTheoremSection}, we give an explicit proof of Theorem \ref{AlphaFolkman}.

\section{$\IP_{\omega^2}$-sets are partition regular}\label{SquaredFolkmanSection} 

We will need the following somewhat strengthened version of Folkman's Theorem:
\begin{lemma}\label{UnmeshedFolkman}
For all $r,n\in\N$ there is some $m=m(r,n)$ such that for any $r$-coloring $c$ of $\mathcal{P}([1,m])\setminus\{\emptyset\}$ there exist sets $\tau_1,\ldots, \tau_n$ and $\tilde r\in [1,r]$ such that $\max \tau_i< \min \tau_{i+1}, i\in [1,n-1]$ and $c$ takes the value $\tilde r$ on all sets of the form $\tau_{i_1}\cup \ldots \cup\tau_{i_k},$ where $1\leq i_1< \ldots< i_k\leq n.$
\end{lemma}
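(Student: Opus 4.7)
\emph{Proof plan.} I will deduce the lemma from Folkman's theorem (Theorem \ref{FolkmanTheorem}) via the binary encoding
\[\phi(\tau):=\sum_{i\in\tau} 2^{i-1},\]
which is a bijection from $\mathcal{P}([1,m])\setminus\{\emptyset\}$ onto $[1,2^m-1]$. The decisive property of $\phi$ is that when $\tau_1,\ldots,\tau_n$ are unmeshed, their binary supports lie in pairwise disjoint, consecutively ordered intervals of $[1,m]$, so unions translate to sums without carries: $\phi(\tau_{i_1}\cup\cdots\cup\tau_{i_k})=\phi(\tau_{i_1})+\cdots+\phi(\tau_{i_k})$. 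Under the pullback coloring $c':=c\circ\phi^{-1}$ on positive integers, the lemma thus reduces to finding $a_1<\cdots<a_n$ with $c'$-monochromatic $FS$ whose binary supports form $n$ consecutive, strictly separated blocks in $[1,m]$.

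My plan is induction on $n$; the base case $n=1$ is trivial. For the inductive step $n\to n+1$ with $r$ colors, I would split the bit range $[1,m(r,n+1)]$ into a bottom segment $[1,s]$ and a top segment $[s+1,m]$, where $s:=m(r^2,n)$ is supplied by the inductive hypothesis applied with $r^2$ colors. For any candidate last block $\tau_{n+1}\subseteq[s+1,m]$, applying the inductive hypothesis to the joint coloring $\tau\mapsto(c(\tau),c(\tau\cup\tau_{n+1}))$ on nonempty $\tau\subseteq[1,s]$ produces unmeshed $\tau_1,\ldots,\tau_n\subseteq[1,s]$ for which the pair $(\alpha_{\tau_{n+1}},\beta_{\tau_{n+1}})\in[r]^2$ is constant across all $2^n-1$ unions. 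Meanwhile, taking the top interval long enough and applying Folkman's theorem (via $\phi$) to the restriction of $c$ to $\mathcal{P}([s+1,m])\setminus\{\emptyset\}$ furnishes many candidate $\tau_{n+1}$'s sharing a common $c$-color $\gamma$. A pigeonhole argument across these candidates fixes $(\alpha_{\tau_{n+1}},\beta_{\tau_{n+1}})$ to some $(\alpha,\beta)$, reducing the task to forcing $\alpha=\beta=\gamma$.

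The hard part will be this final alignment of $\alpha,\beta,\gamma$: mere pigeonholing yields a fixed triple in $[r]^3$ with no guarantee that the three coordinates agree. To force agreement I would iterate, reapplying Folkman inside the surviving candidate pool and inflating the inductive hypothesis so that it delivers $\tau_i$'s simultaneously monochromatic under several ``background'' colorings of the form $\sigma\mapsto c(\sigma\cup A_j)$ for a growing list of fixed sets $A_j$. Carrying this refined induction through produces a bound $m(r,n)$ that grows very rapidly (roughly tower-type) in $n$, which is harmless since the lemma only asserts existence.
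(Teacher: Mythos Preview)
Your plan has a real gap at exactly the point you flag as the ``hard part.'' After pigeonholing you are left with a triple $(\alpha,\beta,\gamma)\in[1,r]^3$ and there is no mechanism in your sketch that forces them to coincide. The two fixes you propose both run into the lemma itself. First, ``applying Folkman (via $\phi$) to $c$ restricted to $\mathcal{P}([s+1,m])$'' does not give you unmeshed candidate blocks in $[s+1,m]$: Folkman on integers returns $b_1,\ldots,b_k$ with monochromatic $FS$, but the binary supports $\phi^{-1}(b_i)$ can overlap, so $\phi^{-1}(b_i+b_j)\neq\phi^{-1}(b_i)\cup\phi^{-1}(b_j)$ in general; getting a pool of candidates whose \emph{unions} are $c$-monochromatic is precisely an instance of the lemma on the top interval. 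Second, even granting such a pool $A_1,\ldots,A_t$, the $\tau_1,\ldots,\tau_n$ produced by your inductive hypothesis depend on which $A_j$ (or which union of $A_j$'s) you feed in, so pigeonholing over the pool does not stabilize the bottom blocks. Inflating the IH to the product coloring $\sigma\mapsto(c(\sigma\cup A))_{A}$ only handles finitely many fixed $A$'s at once; to cover all $2^t-1$ unions of the $A_j$'s simultaneously you again need union-structure among the $A_j$'s, i.e.\ the lemma on the top. So the induction does not close as written.

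The paper sidesteps all of this with a two-line argument using Ramsey's theorem in addition to Folkman. First choose $l$ (by finite Folkman) so that every $r$-coloring of $[1,l]$ admits $a_1,\ldots,a_n$ with $FS(\{a_i\})$ monochromatic; then choose $m$ (by Ramsey for all exponents) so that every $r$-coloring of $\mathcal{P}([1,m])$ admits a set $Y$ of size $l$ on which color depends only on cardinality. Now Folkman applied to the induced coloring of cardinalities yields $a_1,\ldots,a_n$, and taking the $\tau_i$ to be consecutive blocks of $Y$ of sizes $a_1,\ldots,a_n$ finishes the proof, since $|\tau_{i_1}\cup\cdots\cup\tau_{i_k}|=a_{i_1}+\cdots+a_{i_k}\in FS(\{a_i\})$. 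The Ramsey step is what collapses your three colors $\alpha,\beta,\gamma$ into one in a single stroke, and it is the missing ingredient in your plan.
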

\begin{proof}
Fix $r, n\in\N$. Pick, using Folkman's Theorem (and a standard compactness argument), an integer $l$ such that for any $r$-coloring of $[1,l]$ there exist $a_1,\ldots, a_n$ such that $FS(\{a_1,\ldots, a_n\})$ is monochromatic. 

Pick, using Ramsey's Theorem, an integer $m$ such that for any $r$-coloring of ${\mathcal P}(X)$, where $|X|=m$, there exists a set $Y\subseteq X, |Y|=l$ with the color of the subsets of $Y$ depending solely on their cardinality.

Given an $r$-coloring $c$ of the finite non-empty subsets of $[1,m]$, pick $Y=\{y_1< \ldots<y_{l}\}\subseteq [1,m]$ such that for each $k\in [1,l]$ there exists a color $c'(k) $ with $c(\tau)= c'(|\tau|)$ for $\tau\subseteq Y$. By the choice of $l$, pick $a_1, \ldots,a_n$ such that $c'$ is constant on $FS(\{a_1,\ldots, a_n\})$. Finally set $\tau_1=\{y_1, \ldots, y_{a_1}\}, \ldots, \tau_k=\{y_{a_1+\ldots+a_{n-1}+1},\ldots, y_{a_1+\ldots+a_{n}}\}$.  Then whenever $1\leq i_1<\cdots<i_k\leq n$, $c(\tau_{i_1}\cup\cdots\cup\tau_{i_k})=c'(|\tau_{i_1}\cup\cdots\cup\tau_{i_k}|)=c'(a_1+\cdots+a_k)$, which is constant.
\end{proof}


The crucial point of the proof of Theorem \ref{SquaredFolkman} is the observation that every coloring of an $\IP_{\omega\cdot m}$-set $A$ behaves rather simply on a properly chosen $\IP_{\omega\cdot m}$-subset of $A$.

Given $l_1, \ldots, l_m$ we set  $D(l_1, \ldots,l_m )= \mathcal{P}([1,l_1])\times \ldots \times \mathcal{P}([1,l_m]) \setminus\{(\emptyset,\ldots,\emptyset)\}$ and $\supp g =\{k\in [1,m]\mid g(k)\neq \emptyset\}$ for $g\in D(l_1, \ldots,l_m )$.
\begin{lemma}\label{SimpleColoring}
Let $c$ be a finite coloring of an $\IP_{\omega\cdot m}$-set $A$. Then there exists a finite coloring $\tilde c$ of $\mathcal{P}([1,m])\setminus\{\emptyset\}$ such that
\begin{align*}
\forall l_1\exists a_1^1,\ldots, a_{l_1}^1&\\
\forall l_2\exists a_1^2,\ldots,a_{l_2}^2 \\
\cdots\\
\forall l_m\exists a_1^m,\ldots, a_{l_m}^m 
&FS(a^1_1, \ldots, a^m_{l_m})\subseteq A\\ 
&\mbox{and}\  \forall g\in D(l_1,\ldots,l_m)\ c\big(\textstyle{\sum_{i=1}^m\sum_{j\in g(i)}a^i_{j})}\big)=\tilde c (\supp g).
\end{align*}
\end{lemma}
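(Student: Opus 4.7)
The plan is induction on $m$. For the base case $m=1$, we need a single color $\tilde c(\{1\})$ such that $A$ contains a $\tilde c(\{1\})$-monochromatic $\IP_\ell$-set for every $\ell$. Since $A$ is $\IP_\omega$, Folkman's theorem (applied inside a sufficiently large $\IP_L$-subset of $A$) yields a monochromatic $\IP_\ell$-subset for every $\ell$; the set of achievable colors is nonempty and shrinks in $\ell$, so its intersection over all $\ell$ is nonempty and provides $\tilde c(\{1\})$.

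For the inductive step, assume the statement at level $m-1$; let $A$ be $\IP_{\omega m}$ with an $r$-coloring $c$. Given $l_1$, choose $L$ large enough that Lemma \ref{UnmeshedFolkman} applies to $r^{2^{m-1}}$-colorings of $\mathcal{P}([1,L])\setminus\{\emptyset\}$ with $l_1$ blocks. Since $A$ is $\IP_{\omega(m-1)+L}$, successively pick $a_1,\ldots,a_L\in A$ so that $A':=\bigcap_{T\subseteq[1,L]}(A-\sigma_T)$ is $\IP_{\omega(m-1)}$, where $\sigma_T:=\sum_{i\in T}a_i$; in particular $FS(\{a_1,\ldots,a_L\})\subseteq A$. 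Encode how $c$ reacts to each shift by $\sigma_T$ via $\bar c(s):=(c(s+\sigma_T))_{T\subseteq[1,L]}$, an $r^{2^L}$-coloring of $A'$. Apply the inductive hypothesis to $(A',\bar c)$, relabeling $1,\ldots,m-1$ as $2,\ldots,m$, to obtain a coloring $\bar{\tilde c}:\mathcal{P}([2,m])\setminus\{\emptyset\}\to[1,r]^{2^L}$ and the nested existence of $a^2_\cdot,\ldots,a^m_\cdot$.

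Now collapse the dependence on $T$. Define an $r^{2^{m-1}}$-coloring $\delta$ on $\mathcal{P}([1,L])\setminus\{\emptyset\}$ whose value at $T$ packages $c(\sigma_T)$ together with $\bar{\tilde c}(S')_T$ for each non-empty $S'\subseteq[2,m]$. Lemma \ref{UnmeshedFolkman} produces disjoint blocks $\tau_1<\cdots<\tau_{l_1}$ in $[1,L]$ on all of whose non-empty unions $\delta$ takes a constant value $\delta_0$. Set $a^1_j:=\sigma_{\tau_j}$; since the $\tau_j$'s are disjoint, every non-empty sum $\sum_{j\in g(1)}a^1_j$ equals $\sigma_T$ for some non-empty union of $\tau_j$'s, so both $c(\sigma_T)$ and each $c(\sigma_T+\sum_{i\geq 2,j\in g(i)}a^i_j)$ become independent of the particular $g(1)$ and depend only on $\supp g\cap[2,m]$. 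Defining $\tilde c^{(l_1)}(S):=(\delta_0)_{S\setminus\{1\}}$ when $1\in S$ and $\tilde c^{(l_1)}(S):=\bar{\tilde c}(S)_\emptyset$ when $1\notin S$, a short case analysis on whether $1\in\supp g$ verifies the required color identity.

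To obtain a single $\tilde c$ independent of $l_1$, note that keeping only the first $l_1'$ of the $a^1_\cdot$'s from the $l_1$-construction still witnesses the lemma for any $l_1'\leq l_1$ with the very same $\tilde c^{(l_1)}$; hence the nonempty, finite set of witnessing colorings is monotone decreasing in $l_1$, and its intersection over all $l_1$ is nonempty, supplying a uniform $\tilde c$. The main obstacle is orchestrating the three layers of the construction --- peeling $L$ elements from the outer $\omega$-tail of $\omega(m-1)+L$, inducting on the $\omega(m-1)$-residual through the enriched coloring $\bar c$, and then using unmeshed Folkman to erase the $T$-dependence --- while simultaneously arranging that $\tilde c$ is chosen uniformly across all $l_1$.
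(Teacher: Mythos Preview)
Your proposal is correct and follows essentially the same route as the paper's proof: induct on $m$, peel off an outer block of generators using that $A$ is $\IP_{\omega(m-1)+L}$, apply the inductive hypothesis on the residual $\IP_{\omega(m-1)}$-set with the product coloring recording all shifted colors, then use a Folkman-type step to uniformize the dependence on the outer block, and finally pigeonhole over the finitely many possible $\tilde c$'s to remove the dependence on $l_1$. The only cosmetic differences are that the paper indexes the outer block as level $0$ rather than level $1$, invokes ordinary Folkman on the sums $FS(a^0_1,\ldots,a^0_n)$ rather than Lemma~\ref{UnmeshedFolkman} on the index set (these are equivalent via $T\mapsto\sigma_T$), and phrases the final uniformization as ``$\tilde c=\tilde c_{l_0}$ for infinitely many $l_0$'' rather than as a decreasing intersection of witness sets.
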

\begin{proof}
By induction. The case $m=1$ is Folkman's Theorem. 
Suppose the claim holds for $m$ and let $c$ be an $r$-coloring of an $\IP_{\omega\cdot (m+1)}$-set $A$.  Let $l_0$ be given, and let $n$ be large enough that Folkman's Theorem guarantees a monochromatic $\IP_{l_0}$-subset for any $ r^{2^m}$-coloring of an $\IP_n$-set.

Since  $A$ is $\IP_{\omega\cdot m+n}$, we obtain a sequence $a^0_1,\ldots,a^0_n$ such that $FS(a^0_1,\ldots,a^0_n)\subseteq A$ and $A'=\bigcap_{a\in FS(a^0_1,\ldots,a^0_n)\cup\{0\}}A-a$ is $\IP_{\omega\cdot m}$.

Applying the inductive hypothesis to the coloring $$b\mapsto (c(a+b))_{a\in FS(a^0_1,\ldots,a^0_n)\cup\{0\}}$$ on $A'$ we find that there exist colorings $\tilde c_a$, $a\in FS(a^0_1,\ldots,a^0_n)\cup\{0\}$ on $\mathcal{P}([1,m])\setminus\{\emptyset\}$ such that
\begin{align*}
\forall l_1\exists a_1^1,\ldots, a_{l_1}^1&
\forall l_2\exists a_1^2,\ldots,a_{l_2}^2 
\cdots
\forall l_m\exists a_1^m,\ldots, a_{l_m}^m,  FS(a^0_1, \ldots, a^m_{l_m})\subseteq A, \\
\mbox{and }& \forall g\in D(l_1,\ldots,l_m)\forall  a\in FS(a^0_1,\ldots,a^0_n)\cup\{0\}  \\
& c\big(a+\textstyle{\sum_{i=1}^m\sum_{j\in g(i)}a^i_{j})}\big)=\tilde c_a (\supp g).
\end{align*}
Then the map $c'(a)=(c(a),\tilde c_a)$ is an $r\cdot r^{ 2^m-1}$-coloring of $FS(a_1^0,\ldots,a_n^0)$.  By choice of $n$, there are $a'_1,\ldots,a'_{l_0}$ with $FS(a'_1,\ldots,a'_{l_0})\subseteq FS(a_1^0,\ldots,a_n^0)\subseteq A$ such that $FS(a'_1,\ldots,a'_{l_0})$ is monochromatic under $c'$.  Then, taking $a$ to be any element of $FS(a'_1,\ldots,a'_{l_0})$, define an $r$-coloring of $\mathcal{P}([0,m])\setminus\{\emptyset\}$ by
\begin{align*}
\tilde c_{l_0}(S)=\left\{\begin{array}{ll}
\tilde c_0(S)&\text{if }0\not\in S\\
\tilde c_a(S\setminus \{0\})&\text{if }0\in S\text{ and }S\neq\{0\}\\
c(a)&\text{if }S=\{0\}
\end{array}\right..
\end{align*}
Finally pick an $r$-coloring $\tilde c$ of $\mathcal{P}([0,m])\setminus\{\emptyset\}$ so that $\tilde c=\tilde c_{l_0}$ for infinitely many $l_0$.
\end{proof}

\begin{proof}[Proof of Theorem \ref{SquaredFolkman}.]
Fix $r$ and $n$ and pick $m=m(r,n)$ as in Theorem \ref{UnmeshedFolkman}. Let $c$ be an  $r$-coloring of an $\IP_{\omega\cdot m} $-set $A$. 
Pick $\tilde c$ according to Lemma \ref{SimpleColoring}. Pick $\tau_1, \ldots, \tau_n$ and a color $\tilde r\in [1,r]$ according to Lemma \ref{UnmeshedFolkman} applied to the coloring $\tilde c$.

It suffices to show that
\begin{align*}
  \forall L_1\exists b_1^1,\ldots, b_{L_1}^1&\\
  \forall L_2\exists b_1^2,\ldots,b_{L_2}^2 \\
  \cdots\\
  \forall L_n\exists b_1^n,\ldots, b_{L_n}^n
  &FS(b^1_1, \ldots, b^n_{L_n})\subseteq A\\
&\mbox{and}\ \forall b\in FS(b^1_1, \ldots, b^n_{L_n})\ c(b)=\tilde r,
\end{align*}
since the set of all such $b$ gives rise to an $\IP_{\omega\cdot n}$-subset of $A$.

Given $L_1,\ldots,L_n$, we will describe the construction of these sequences, taking care that the choice of $b_1^j,\ldots,b_{L_j}^j$ depends only on $L_i$ with $i\leq j$.  For each $j\leq m$, let $l_j=L_k$ if $j\in \tau_k$; if $j\not\in\tau_k$ for any $k$, let $l_j$ be arbitrary (say, $1$).

For each $k\leq n, i\leq L_k$, we let $b_i^k=\sum_{j\in\tau_k}a_i^j$.  Note that the choice of $b_i^k$ depends only on $a_i^j$ for $j\leq\max\tau_k$, which in turn depends only on $L_{k'}$ for $k'\leq k$, so we have satisfied the requirements given by the order of the quantifiers.

Each $b\in FS(b^1_1, \ldots, b^n_{L_n})$, has the form $b=\sum_{i=1}^m\sum_{j\in g(i)}a^i_{j}$ for some $g$ with $\supp(g)=\tau_{i_1}\cup\cdots\cup\tau_{i_k}$ for some $i_1<\cdots<i_k$.  
Therefore $c(b)=\tilde c(\tau_{i_1}\cup\cdots\cup\tau_{i_k})=\tilde r$, as desired.
\end{proof}

\section{Trees of Integers}\label{TreeDefinitionSection}
As the descriptions above indicate, it becomes unwieldy to describe $\IP_\alpha$-sets explicitly as $\alpha$ gets large.  Instead, we introduce the following notion of a tree.  (This definition differs slightly from usual ones, to simplify the statements and proofs below.)

\begin{definition}

  A \emph{tree} $T$ is a collection of non-empty finite sets of integers such that if $\sigma\in T$ and $\emptyset\neq\tau\subseteq\sigma$ then $\tau\in T$.  We write $\sigma<n$ if $m<n$ for all $m\in FS(\sigma)$.  We write $\sigma<\tau$ if $\tau$ is nonempty and $\sigma<n$ for every $n\in FS(\tau)$.  We write $\sigma<T$ if $\sigma<\tau$ for every $\tau\in T$.

We write $FS(T)$ for $\bigcup_{\sigma\in T}FS(\sigma)$.  If $T,T'$ are trees, we write $T'\subseteq_{FS}T$ if $FS(T')\subseteq FS(T)$.

If $\sigma\in T$, we define a new tree, $T-\sigma$, by setting $\tau\in (T-\sigma)$ iff $\sigma<\tau$ and $\sigma\cup\tau\in T$.  When $\sigma<T$, $T+\sigma$ is the tree generated by $T\cup\{\sigma\cup\tau\mid \tau\in T\}$.

We may view $T$ as being partially ordered by the relation $\sigma\prec\tau$ iff $\tau\subseteq\sigma$ and there is a $\sigma'\subseteq\sigma$ such that $\tau<\sigma'$.  We write $ht(T)$ for the ordinal height of $T$ under this ordering if it is well-founded, and $ht(T)=\infty$ if $T$ is ill-founded.

\end{definition}
For every countable ordinal $\alpha$, there is a tree with $ht(T)=\alpha$; for instance, letting $\pi:\alpha\rightarrow\N$ and $p:\N\times\N\rightarrow\N$ be bijections, and we can define $T$ by setting $\sigma\in T$ iff $\sigma$ has the form $\{p(1,\pi(\gamma_1)),\ldots,p(k,\pi(\gamma_k))\}$ where $\alpha>\gamma_1>\gamma_2>\cdots>\gamma_k$.

Note that saying $T$ is ill-founded under $\prec$ means that it contains an infinite path, which is an infinite set $\Lambda$ with every finite subset of $\Lambda$ belonging to $T$.  Moreover, note that any infinite set with all finite subsets belonging to $T$ gives an infinite $\prec$-descending sequence (since given $\tau\subseteq\Lambda$, we may always find a $\sigma'\subseteq\Lambda$ with $\tau<\sigma'$).  Also, since the cardinality of $T$ is countable, either $ht(T)<\omega_1$ or $ht(T)=\infty$. 

Note also that $ht(T)\geq \alpha+\beta$ implies $ht\{\sigma:ht(T-\sigma)\geq \alpha\}\geq \beta$.

The following is the first of many proofs by transfinite induction in this paper.  In such proofs, we will write ``IH'' to abbreviate the inductive hypothesis.

\begin{proposition}
  If there is a tree $T$ of height $\alpha$ with $FS(T)\subseteq A$ then $A$ is an $\IP_\alpha$-set.
\end{proposition}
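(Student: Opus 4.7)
The plan is transfinite induction on $\alpha$, strengthening the claim slightly to the more convenient form: if $T$ is a tree with $ht(T) \geq \alpha$ and $FS(T) \subseteq A$, then $A$ is $\IP_\alpha$. The base case $\alpha = 1$ is immediate, since $ht(T) \geq 1$ forces $T$ to be non-empty, hence $FS(T)$ and therefore $A$ is non-empty, which is the definition of $\IP_1$.

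For the successor step, suppose $ht(T) \geq \alpha + 1$. The parenthetical observation stated just before the proposition, applied with $\beta = 1$, produces some $\sigma \in T$ with $ht(T - \sigma) \geq \alpha$. I would then set $a := \sum \sigma$, which lies in $FS(\sigma) \subseteq FS(T) \subseteq A$. The key bookkeeping is to verify $FS(T-\sigma) \subseteq A \cap (A-a)$: given $\tau \in T-\sigma$ and $b \in FS(\tau)$, the definition of $T - \sigma$ yields $\sigma < \tau$ and $\sigma \cup \tau \in T$, so $a + b \in FS(\sigma \cup \tau) \subseteq FS(T) \subseteq A$, while $b \in FS(\tau) \subseteq FS(T) \subseteq A$ directly. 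Applying the inductive hypothesis to the tree $T - \sigma$ and the set $A \cap (A-a)$ then gives that $A \cap (A-a)$ is $\IP_\alpha$, and together with $a \in A$ this yields $A \in \IP_{\alpha + 1}$.

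The limit case is essentially free: if $ht(T) \geq \lambda$, then trivially $ht(T) \geq \beta$ for every $\beta < \lambda$, so by the inductive hypothesis $A$ is $\IP_\beta$ for each such $\beta$, hence $\IP_\lambda$ by definition. The only step with any real content is the successor case, and even there the only obstacle is the set-theoretic verification $FS(T - \sigma) \subseteq A \cap (A-a)$; the combinatorial content of the ordinal descent is entirely packaged into the hint already stated in the text.
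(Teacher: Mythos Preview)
Your proof is correct and follows essentially the same transfinite induction as the paper. The only cosmetic difference is that the paper picks a singleton $\{a\}\in T$ with $ht(T-\{a\})\geq\alpha$, whereas you pick a general $\sigma\in T$ and set $a=\sum\sigma$; both choices work, and your more detailed verification that $FS(T-\sigma)\subseteq A\cap(A-a)$ is exactly what the paper leaves implicit.
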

\begin{proof}
  By induction on $\alpha$.  When $\alpha=0$, this is trivial, and the limit stage is trivial as well.  If there is a tree $T$ of height $\alpha+1$ with $FS(T)\subseteq A$, choose $\{a\}\in T$ with $ht(T-\{a\})\geq\alpha$.  Since $FS(T-\{a\})\subseteq A$ and $FS(T-\{a\})\subseteq A-a$, it follows that $A\cap (A-a)$ is an $\IP_\alpha$ set, and therefore $A$ is an $\IP_{\alpha+1}$-set.
\end{proof}

The reverse is not quite true: to obtain an exact equivalence, we would want to consider the height of trees under $\supseteq$ rather than $\prec$.  However it will be more convenient to work with height under $\prec$. And the reverse is still almost true in the sense that it holds for limit ordinals.

\begin{proposition}
If $A$ is an $\IP_\lambda$-set for a limit ordinal $\lambda$, then there is a tree $T$ of height $\lambda$ such that $FS(T)\subseteq A$.
\end{proposition}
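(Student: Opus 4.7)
The plan is a transfinite induction on $\lambda$. As stated, the induction will not close cleanly, because a cofinal sequence in a countable limit ordinal may consist entirely of successor ordinals (for instance, $\omega+\omega$ has no cofinal sequence of limit ordinals strictly below it). So I would actually prove a stronger statement by induction on every ordinal $\alpha\geq 1$: for every $N\in\N$ and every $\IP_\alpha$-set $A$ containing an element $\geq N$, there exists a tree $T$ of height $\geq\alpha$ with $FS(T)\subseteq A$ and every integer appearing in $T$ at least $N$.

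For the limit step, I would fix a cofinal sequence $\beta_n\nearrow\lambda$ in $\lambda$ and inductively apply the IH to produce trees $T_n$ of height $\geq\beta_n$ with $FS(T_n)\subseteq A$ and integer entries $\geq N_n$, where $N_n$ is chosen after seeing $T_0,\ldots,T_{n-1}$ to exceed every integer appearing in them. Taking $T:=\bigcup_n T_n$, the pairwise disjointness of the integer supports forces any $\prec$-comparison between a member of $T_n$ and a member of $T_m$ with $n\neq m$ to be vacuous (since $\sigma'\prec\sigma$ requires $\sigma\subseteq\sigma'$, impossible for disjoint nonempty sets). Hence the rank of each $\sigma\in T_n$ in $T$ equals its rank in $T_n$, and $ht(T)\geq\sup_n ht(T_n)\geq\sup_n\beta_n=\lambda$, while $FS(T)\subseteq A$ by construction.

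For the successor step, $A$ being $\IP_{\alpha+1}$ yields $a\in A$ such that $A\cap(A-a)$ is $\IP_\alpha$. I would apply the IH to $A\cap(A-a)$ with threshold $N'$ much larger than $a$, producing a tree $T_0$ of height $\geq\alpha$ whose integer entries all exceed $a$. Set $T:=T_0\cup\{\{a\}\}\cup\{\{a\}\cup\tau:\tau\in T_0\}$; the containment $FS(T_0)\subseteq A\cap(A-a)$ immediately gives $FS(T)\subseteq A$. Verifying $ht(T)\geq\alpha+1$ reduces to showing $\mathrm{rank}_T(\{a\}\cup\tau)\geq\mathrm{rank}_{T_0}(\tau)$ for each $\tau\in T_0$, from which $\mathrm{rank}_T(\{a\})\geq ht(T_0)\geq\alpha$ then follows via the $\prec$-edges $\{a\}\cup\tau\prec\{a\}$ (with witness $\sigma'=\tau$, valid since every element of $\tau$ exceeds $a$). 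This rank-preservation is the main technical obstacle: it requires that $\prec$-witnesses $\sigma'$ in $T_0$ (subsets satisfying $\min\sigma'>\sum\tau$) continue to witness the corresponding edges in $T$, where the stricter inequality $\min\sigma'>a+\sum\tau$ must hold. I would address this by further strengthening the IH to enforce a fast-growth property on the witnesses (for instance, $\min\sigma'\geq 2\sum\tau$), which must then be shown to persist through both the limit and successor steps of the induction.
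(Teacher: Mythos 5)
Your strengthened inductive hypothesis --- that every $\IP_\alpha$-set $A$ containing an element $\geq N$ admits a tree $T$ of height $\geq\alpha$ with $FS(T)\subseteq A$ and all entries $\geq N$ --- is false already for finite $\alpha$, and those finite cases are precisely what your limit stage at $\lambda=\omega$ must invoke. Take $A=\{10,100,110\}$, $\alpha=2$, $N=50$: $A$ is $\IP_2$ (since $A\cap(A-10)=\{100\}\neq\emptyset$) and contains $100\geq 50$, but the only elements of $A$ that are $\geq 50$ are $100$ and $110$, and $100+110=210\notin A$, so any tree with $FS(T)\subseteq A$ and entries $\geq 50$ consists solely of singletons and has height at most $1$. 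The underlying problem is that being $\IP_\alpha$ for a successor $\alpha$ gives no control over how the $\IP$-structure of $A$ sits relative to a threshold $N$; in the successor step you apply the hypothesis to $A\cap(A-a)$, which for finite $\alpha$ can be a small finite set with no large elements at all. You also correctly flag the rank-preservation obstruction, but the proposed fix (a fixed multiplicative witness condition $\min\sigma'\geq 2\sum\tau$) does not persist: after prepending $\{a\}$ the witness must satisfy $\min\sigma'\geq 2(a+\sum\tau)$, which strictly exceeds the guaranteed $2\sum\tau$, so the growth constant degrades at every successor step and cannot be maintained uniformly across the cofinal sequence.

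The paper avoids both difficulties by never passing through arbitrary successor ordinals. It proves, by induction restricted to limit ordinals $\lambda$, the boosted claim that if $A$ is $\IP_{\lambda+m(n)}$ (with $\lambda$ a limit and $m(n)$ coming from the elementary fact that a sufficiently large finite $FS$-set contains a tree of height $n$), then there is a tree of height $\lambda+n$ with $FS(T)\subseteq A$; the slack $m(n)-n$ absorbs exactly the finite adjustments that derail your argument. Crucially it also observes that passing from $T$ to $T^{(k)}:=\{\tau\cap\{k,k+1,\ldots\}:\tau\in T\}$ lowers $ht(T)$ by at most a finite amount, so a limit-ordinal height is unchanged; this is what lets one assume, without loss, that the tree supplied by the inductive hypothesis lies entirely above a chosen finite set, and it is the mechanism you were trying to invent with the threshold $N$. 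Since this invariance holds \emph{only} at limit heights, the induction really must be organized around limit ordinals, with the finite boosts $\lambda\mapsto\lambda+n$ handled all at once using the $m(n)$ slack rather than one singleton at a time; your union-at-limits step is then the same as the paper's.
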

\begin{proof}
For a tree $T$ and $n\in\N$, write $T^{(n)}:=\{\tau\cap\{n,n+1,\ldots\}: \tau \in T\}$. 
Clearly $ht(T)\geq \alpha+n$ implies $ht(T^{(n)})\geq \alpha$ and in particular if $\lambda=ht(T)$ is a limit ordinal, then also $\lambda=ht(T^{(n)})$.

Note that for each $n\in \N$ there is some $m=m(n)\in \N$ such that if $|\tau|\geq m$, then there is a tree $T$ of height $n$ with $FS(T)\subseteq FS(\tau)$. 

We claim that if $A$ is an $\IP_{\lambda+m(n)}$-set (where $\lambda$ is a limit ordinal), then there exists a tree $T$ of height $\lambda + n$ such that $FS(T)\subseteq A$. Indeed pick a set $\tau$ with $|\tau|=m(n)$ such that $A'=\bigcap_{a\in FS(\tau)}A-a$ is an $\IP_\lambda$ set. 
Pick $\sigma$ such that $FS(\sigma)\subseteq FS(\tau)$ and the tree generated by $\sigma $ has height $n$. By IH there is a tree $T$ of height $\lambda$ with $FS(T)\subseteq A'$ and by the above comment we may additionally assume $T>\sigma$. It follows that $T+\sigma$ is a tree of height $\lambda+n$ with $FS(T+\sigma)\subseteq A$.
\end{proof}

\begin{definition}  If $r$ is an integer, an \emph{$r$-coloring} of a tree $T$ is a function $c:FS(T)\rightarrow [1,r]$.  We say $T$ is \emph{monochromatic} if $c$ is constant.
\end{definition}

Hindman's Theorem can be stated in the following form:
\begin{theorem}[Hindman's Theorem]
  If $c$ is an $r$-coloring of $T$ and $ht(T)=\infty$, there is a $T'\subseteq_{FS} T$ such that $ht(T')=\infty$ and $c$ is monochromatic on $FS(T')$.
\end{theorem}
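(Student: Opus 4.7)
The plan is a short reduction directly to Theorem~\ref{HindmanTheorem}. The hypothesis $ht(T)=\infty$ unpacks, as noted after the tree definition, to the existence of an \emph{infinite path}: an infinite set $\Lambda\subseteq\N$ all of whose finite subsets belong to $T$. In particular $FS(\Lambda)\subseteq FS(T)$, so the restriction of $c$ to $FS(\Lambda)$ is a finite coloring of an $\IP$-set.

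Next I would invoke the standard ``block'' form of Hindman's Theorem (a well-known equivalent reformulation of Theorem~\ref{HindmanTheorem}): there exist pairwise disjoint finite subsets $\pi_1<\pi_2<\cdots$ of $\Lambda$ such that, setting $s_i:=\sum_{n\in\pi_i}n$ and $\tau':=\{s_1,s_2,\ldots\}$, the set $FS(\tau')$ is monochromatic under $c$. Disjointness of the blocks is what lets us stay inside $FS(\Lambda)$: each element of $FS(\tau')$ equals $\sum_{i\in I}s_i=\sum_{n\in\bigcup_{i\in I}\pi_i}n$, which is a sum from $\Lambda$, so $FS(\tau')\subseteq FS(\Lambda)\subseteq FS(T)$.

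Finally I would take $T':=\{\sigma\mid\emptyset\neq\sigma\subseteq\tau',\ \sigma\text{ finite}\}$. This collection is closed under nonempty subsets, hence a tree in the sense of this paper; $FS(T')=FS(\tau')$ is monochromatic and contained in $FS(T)$, giving $T'\subseteq_{FS}T$ together with the desired monochromaticity; and $\tau'$ is itself an infinite path through $T'$, so $T'$ is ill-founded and $ht(T')=\infty$.

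The only substantive step is the passage from the form of Hindman's Theorem stated in the introduction to the block form used here. This is a routine equivalence (it can be extracted, for instance, from Hindman's original combinatorial proof or from the Galvin--Glazer ultrafilter argument), and it constitutes the sole conceptual input to the proof; the remaining construction of $T'$ from $\tau'$ is mere packaging.
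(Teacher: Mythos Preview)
Your reduction is correct and is essentially what the paper has in mind: this statement is presented as a reformulation of Theorem~\ref{HindmanTheorem} without a separate proof, and the same one-line argument (ill-founded $\Rightarrow$ infinite branch $\Lambda$ $\Rightarrow$ monochromatic sub-$\IP$-set inside $FS(\Lambda)$ via Hindman $\Rightarrow$ infinite branch through $mono_i(T)$) appears in the proof of the equivalence theorem immediately following. Your only addition is making explicit that the block/finite-unions form of Hindman's Theorem is what guarantees $FS(T')\subseteq FS(T)$, which the paper leaves implicit.
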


\begin{theorem}
  The following are equivalent:
  \begin{enumerate}
  \item Hindman's Theorem.
  \item For every countable $\alpha$ and every $r$, there is a countable $\beta$ such that for any $T$ with $ht(T)\geq\beta$ and every $r$-coloring of $T$, there is a $T'\subseteq_{FS}T$ such that $ht(T)\geq\alpha$ and $c$ is constant on $FS(T')$.
  \end{enumerate}
\end{theorem}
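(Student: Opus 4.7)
I would split the equivalence into its two implications.

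\smallskip

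\emph{(2)$\Rightarrow$(1).} I would mimic the short argument given just after Theorem~\ref{AlphaFolkman}. Given a tree $T$ with $ht(T)=\infty$ and an $r$-coloring $c$, for each countable $\alpha$ invoke (2) at $(\alpha,r)$ (valid since $ht(T)=\infty$ exceeds any countable $\beta(\alpha,r)$) to extract a monochromatic $T_\alpha\subseteq_{FS} T$ with $ht(T_\alpha)\geq\alpha$ in some color $\tilde r(\alpha)\in[1,r]$. Pigeonhole on the finite palette $[1,r]$ produces a single color $\tilde r$ realised by unboundedly many $\alpha<\omega_1$, so the color class $C:=c^{-1}(\tilde r)\cap FS(T)$ is $\IP_\alpha$ for unboundedly many $\alpha$. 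Monotonicity of the $\IP_\alpha$-hierarchy then promotes this to ``$C$ is $\IP_\alpha$ for every $\alpha<\omega_1$'', so $C$ is $\IP_{\omega_1}$, and Proposition~\ref{IPRole} identifies $C$ as an $\IP$-set. An infinite generator $\Lambda$ with $FS(\Lambda)\subseteq C$, together with the tree $T'$ of all nonempty finite subsets of $\Lambda$, yields a monochromatic $T'\subseteq_{FS} T$ with $ht(T')=\infty$.

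\smallskip

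\emph{(1)$\Rightarrow$(2).} I would argue the contrapositive via a compactness/pigeonhole construction. Suppose (2) fails at some $(\alpha,r)$; for each $\beta<\omega_1$ fix a counterexample $(T_\beta,c_\beta)$ with $ht(T_\beta)\geq\beta$ and no monochromatic $T'\subseteq_{FS} T_\beta$ of $ht(T')\geq\alpha$. The plan is to construct a single pair $(T^*,c^*)$ with $ht(T^*)=\infty$ and $c^*$ admitting no monochromatic subtree of $ht\geq\alpha$, contradicting Hindman's Theorem (in tree form) applied to $(T^*,c^*)$.

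Let $\Lambda=\{a_1<a_2<\ldots\}$ be a rapidly growing sequence (chosen so sums from $\Lambda$ have unique subset representations) and let $T^*$ be the tree of all nonempty finite subsets of $\Lambda$, so that $ht(T^*)=\infty$. Build $c^*\colon FS(\Lambda)\to[1,r]$ inductively, maintaining at stage $n$ an uncountable $I_n\subseteq\omega_1$ such that for each $\beta\in I_n$ there is a node $\{a'_{1,\beta}<\ldots<a'_{n,\beta}\}$ of $T_\beta$ satisfying
\[
c_\beta\bigl(\textstyle\sum_{i\in J}a'_{i,\beta}\bigr)\;=\;c^*\bigl(\textstyle\sum_{i\in J}a_i\bigr)\qquad\text{for every non-empty }J\subseteq[1,n].
\]
For the inductive step use $ht(T_\beta)\geq\beta$ to extend each embedded $\IP_n$ to an $\IP_{n+1}$ inside $T_\beta$ and pigeonhole the $r^{2^n}$ possible color patterns of the new sums; regularity of $\aleph_1$ supplies an uncountable $I_{n+1}\subseteq I_n$ realising a common pattern, which we adopt as the values of $c^*$ on these new sums.

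The main obstacle is verifying that $c^*$ admits no monochromatic $T'\subseteq_{FS} T^*$ with $ht(T')\geq\alpha$. The easy case is $FS(T')\subseteq FS(\{a_1,\ldots,a_n\})$ for some finite $n$ (the only possibility when $\alpha<\omega$): pulling $T'$ back through the $\IP_n$-embedding associated with any $\beta\in I_n$ produces a monochromatic subtree of $T_\beta$ of the same height, contradicting the counterexample property of $(T_\beta,c_\beta)$. The hard case---when $T'$ involves unboundedly many $a_i$'s, which is forced when $\alpha\geq\omega$---requires strengthening the invariant so that the $\IP$-embeddings for $\beta\in I_{n+1}$ coherently refine those for $\beta\in I_n$, enabling $T'$ to be pulled back level by level along a single thread of witnesses surviving to all finite stages. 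Securing such a coherent thread of witnesses (through, e.g., stationary-set book-keeping on the $I_n$'s) is the technical heart of the argument and the step I expect to be the principal hurdle.
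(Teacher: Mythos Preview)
Your argument for (2)$\Rightarrow$(1) is correct and essentially matches the paper's, though you route through Proposition~\ref{IPRole} where the paper argues directly by a tree-level pigeonhole: since there are only countably many finite sets of integers, one can build an infinite $\Lambda$ each of whose finite subsets lies in uncountably many of the monochromatic $T^\alpha$, hence $FS(\Lambda)$ is monochromatic.

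For (1)$\Rightarrow$(2) the paper takes a completely different and much shorter route. It observes that for fixed $\alpha,r$ the property ``every monochromatic subtree of $(T,c)$ has height $\leq\alpha$'' is Borel (one only has to check the $r$ maximal monochromatic subtrees $mono_i(T)$), and that by Hindman's Theorem every $(T,c)$ with this property has $T$ well-founded; then $\Sigma^1_1$-bounding immediately yields a countable bound $\beta$ on the heights of such $T$. No construction is carried out.

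Your direct-construction approach is not wrong in spirit, but the gap you flag is real and not merely a bookkeeping nuisance. A nested sequence $I_0\supseteq I_1\supseteq\cdots$ of uncountable subsets of $\omega_1$ can easily have empty intersection, so there need not be any single $\beta$ carrying a coherent embedding through all finite stages; and without such a $\beta$ you have no target into which to pull back an infinite-height monochromatic $T'\subseteq_{FS}T^*$. Stationary-set arguments do not obviously help, since the extension choices at each stage are not regressive in the sense Fodor's lemma requires. Completing your plan would amount to proving by hand that a Borel class of well-founded colored trees has bounded rank---which is precisely the boundedness theorem the paper invokes off the shelf. So while your instinct to compactify the counterexamples is natural, the paper's appeal to $\Sigma^1_1$-bounding is the right tool and replaces the entire construction in one line.
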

\begin{proof}
  Suppose $(2)$, and let $c$ be a coloring of the integers.  Take the collection of all non-empty finite sets of integers to be a tree; this tree is clearly ill-founded, so $(2)$ implies that for every $\alpha$, we can find a $T^\alpha$ with $ht(T^\alpha)\geq\alpha$ and $c$ constant on $FS(T^\alpha)$.  But since there are only countably many finite sets of integers, we may choose an infinite set $\Lambda$ such that every finite subset of $\Lambda$ belongs to uncountably many $T^\alpha$, and therefore $c$ is constant on $FS(\Lambda)$.

  For the converse, note that for any $T$ and $c$, we can define $mono_i(T)$, the largest subtree of $T$ of color $i$, by $\sigma\in mono_i(T)$ iff $c$ is constantly equal to $i$  on $FS(\sigma)$ and there is a $\tau\in T$ such that $FS(\sigma)\subseteq FS(\tau)$.  Let $\alpha,r$ be given, and consider those $T$ and $r$-colorings $c$ of $T$ such that every monochromatic sub-tree of $T$ has height $\leq\alpha$.  Clearly all such trees are well-founded: if $T$ is ill-founded, it has an infinite branch, and that infinite branch has an infinite monochromatic subset, which in turn gives an infinite branch through $mono_i(T)$.  But the property of having a monochromatic sub-tree of height $\leq\alpha$ is $\Sigma^1_1$ (indeed, hyperarithmetic), and therefore by $\Sigma^1_1$-Bounding  (cf.\ for instance \cite[Section 35.D]{Kech95}), there must be a bound on the height of trees with this property, and adding $1$ to this bound gives the desired $\beta$.
\end{proof}

We conclude this section with an example which illustrates why the simple proof of Theorem \ref{SquaredFolkman} does not generalize in a straightforward way to higher ordinals. 
\begin{example}\label{NoCubedFolkman}
For any $\alpha$, there is a tree $T$ of height $\alpha\cdot\omega\cdot\alpha$ and a two-coloring of $T$ such that there exists no monochromatic tree of height larger then $\alpha\cdot\omega$. 

First choose a tree $T$ of height $\alpha\cdot\omega\cdot\alpha$ such that the following hold:
\begin{enumerate}
\item  The elements of $FS(T)$ are uniquely represented in the sense that  if $x\in FS(\sigma_1)$ and $x\in FS(\sigma_2)$ for some $\sigma_1, \sigma_2\in T$ then $x\in FS(\sigma_1\cap\sigma_2)$. 
\item If $k_1+\ldots +k_n\in FS(T)$ for some $k_i$ with each $\{k_i\}\in T$ then $\{k_1, \ldots, k_n\}\in T$. 
\item If $\tau<\sigma<\tau'$, $\tau\cup\sigma\in T$, $\sigma\cup\tau'\in T$, then $\tau\cup\sigma\cup\tau'\in T$.
\end{enumerate}
To see that such a tree exists, observe that it is simple to construct one in the semigroup of finite sets of integers (with $\cup$ as the operation and the collection of finite unions replacing $FS$), and that mapping a finite set $s$ to $\sum_{i\in s}2^i$ allows us to easily transfer this construction to our setting.

We may think of those $\{k\}\in T$ as ``basic elements'' of $T$; if $x\in FS(T)$ and $k_1^1+\cdots+k_n^1=k_1^2+\cdots+k_m^2=x$ then by the second property, $\{k_1^1,\ldots,k_n^1\},\{k_1^2,\ldots,k_m^2\}\in T$.  But then by the first property, $x\in FS(\{k_1^1,\ldots,k_n^1\}\cap\{k_1^2,\ldots,k_m^2\})$, so it must be that $m=n$ and $k_i^1=k_i^2$ for each $i\leq n$.

The third property implies that if $\sigma\in T$, $\tau<\sigma$, and $\tau\cup\sigma\in T$ then $ht(T-(\tau\cup\sigma))=ht(T-\sigma)$.

By the first property we may assign to each $x\in FS(T)$ the minimal (w.r.t.\ $\subseteq$) $\sigma_x \in T$ such that $x\in FS(T)$.

We first define $o(\sigma)$ for $\sigma\in T$ to be the unique ordinal $<\alpha$ such that $\alpha\cdot\omega\cdot o(\sigma)\leq ht(T-\sigma)<\alpha\cdot\omega\cdot (o(\sigma)+1)$.  For $x\in FS(T)$ with $\sigma_x=\{k_1<\cdots<k_n\}$, we define
\[c(x):=\left\{\begin{array}{ll}
1&\text{if }o(\{k_1\})=o(\{k_n\})\\
2&\text{if }o(\{k_1\})\neq o(\{k_n\})\\
\end{array}\right..\]
We must show that $ht(mono_1(T)),ht(mono_2(T))\leq\alpha\cdot\omega$.

To see that $ht(mono_1(T))\leq\alpha\cdot\omega$, first define $T^\gamma:=\{\sigma\mid \forall\tau\sqsubseteq\sigma o(\tau)=\gamma\}$ (that is, $T^\gamma$ consists of those $\{k_1<\cdots<k_n\}$ such that $o(\{k_1\})=o(\{k_n\})=\gamma$).  It is easy to see that $mono_1(T)=\bigcup_{\gamma<\alpha}T^\gamma$.  For any $k\in FS(mono_1(T))$, $mono_1(T)-\{k\}\subseteq T^\gamma$ for some $\gamma$.  Since clearly $ht(T^\gamma)=\alpha\cdot\omega$, it follows that $ht(mono_1(T)-\{k\})\leq\alpha\cdot\omega$, and therefore $ht(mono_1(T))\leq\alpha\cdot\omega$.

To see that $ht(mono_2(T))\leq\alpha\cdot\omega$, we will explicitly define a function $rk:mono_2(T)\rightarrow\alpha\cdot\omega$.  Given $\sigma\in mono_2(T)$, observe that $\sigma_\sigma:=\bigcup_{x\in\sigma}\sigma_x\in T$.  Let $o'(\sigma):=min_{x\in\sigma}o(\{\max\sigma_x\})=o(\max\{\sigma_\sigma\})$.  Let $n_\sigma$ be the largest integer such that there is a $\tau_\sigma$ with:
\begin{itemize}
\item $|\tau_\sigma|=n_\sigma$
\item $\tau_\sigma\cap\sigma_\sigma=\emptyset$
\item $\tau_\sigma\cup\sigma_\sigma\in T$
\item For each $k\in \tau_\sigma$, $o(\{k\})>o'(\sigma)$.
\end{itemize}
Define $rk(\sigma)=o'(\sigma)\cdot\omega+n_\sigma$.  Suppose $\tau\prec\sigma$ with $\tau,\sigma\in mono_2(T)$; if $o'(\tau)<o'(\sigma)$ then $rk(\tau)<rk(\sigma)$.  Otherwise, $o'(\tau)=o'(\sigma)$; consider $\tau':=\{k\in\sigma_{\tau\setminus\sigma}\mid o(k)>o'(\sigma)\}$.  $\tau'\neq\emptyset$ since $\tau'\in mono_2(T)$, and since $\sigma_{\tau\setminus\sigma}\cap\sigma_\sigma=\emptyset$, it follows that $n_\sigma\geq |\tau'|+n_\tau$, and in particular $rk(\tau)<rk(\sigma)$.

Since $rk(\sigma)<\alpha\cdot\omega$ for all $\sigma\in mono_2(T)$, it follows that $ht(mono_2(T))\leq\alpha\cdot\omega$.
\end{example}

In particular, setting $\alpha=\omega$, we see that there is a $2$-coloring of a tree of height $\omega^3$ whose monochromatic subsets have height at most $\omega^2$.

\section{A Proof of the Ordinally Effective Hindman's Theorem}\label{HindmansTheoremSection}
We give a proof ``unwinding'' the argument given in \cite{Tow09} to extract explicit information about ordinals from it.  A similar unwinding can be given for the proof of Hindman's Theorem due to Baumgartner, \cite{b74}, however because the reverse mathematical strength of that proof is higher (see \cite{blass87}), the ordinals bounds are much worse.  (In principle, an unwinding of Hindman's original proof should give ordinals bounds similar to the ones we give here, however that proof is sufficiently complicated that we are unsure what an unwinding would look like.)

To state the first lemma, it will be helpful to have the following {\em ad hoc} definition:
\begin{definition}
  $n\in FS_{\geq 2}(T)$ if there is a $\sigma\in T$ with $n=\sum_{i\in\sigma}i$ and $|\sigma|\geq 2$.
\end{definition}

\begin{lemma}
  Let $\sigma<\tau<T$, with $ht(T)\geq\alpha\cdot\beta$, and $c$ a coloring of $T+\tau+\sigma$ be given.  Then one of the following holds:
  \begin{itemize}
  \item There is a $T'\subseteq_{FS}T$, $ht(T')\geq\beta$ such that whenever $m\in FS_{\geq 2}(T'+\tau)$, there is an $n\in FS(\sigma)$ such that $c(m)=c(m+n)$.
  \item There is a $\tau'\in FS(T)$ and a $T'\subseteq_{FS}T-\tau'$ such that $ht(T')\geq\alpha$ and for every $n\in FS(T')$, there is an $m\in FS(\tau\cup\tau')$ such that for every $m'\in FS(\sigma)$, $c(n+m)\neq c(n+m+m')$.
  \end{itemize}
\end{lemma}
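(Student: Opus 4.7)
The plan is to proceed by transfinite induction on $\beta$, treating $\alpha$ as a fixed parameter. The base case $\beta=0$ is immediate: take $T':=\emptyset$ in the first alternative, so that $T'+\tau=\emptyset$, $FS_{\geq 2}(T'+\tau)=\emptyset$, and the required property is vacuous.

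For the successor step $\beta=\beta_0+1$, we have $ht(T)\geq\alpha\cdot\beta_0+\alpha$, and the remark that $ht(T)\geq\gamma+\delta$ implies $ht\{\sigma:ht(T-\sigma)\geq\gamma\}\geq\delta$ yields $S:=\{\sigma_0\in T:ht(T-\sigma_0)\geq\alpha\cdot\beta_0\}$ of $\prec$-height $\geq\alpha$ in $T$. For each $\sigma_0\in S$ the chain $\sigma<\tau\cup\sigma_0<T-\sigma_0$ holds (using $\tau<T\ni\sigma_0$), so IH applies to the triple $(T-\sigma_0,\tau\cup\sigma_0,\sigma)$ at $\beta_0$. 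If for some $\sigma_0$ the IH returns its second alternative with witnesses $(\tau^{\ast},T^{\ast})$, we propose $\tau':=\sigma_0\cup\tau^{\ast}\in T$ and $T':=T^{\ast}$: the color-breaking clause carries over since $FS(\tau\cup\tau')=FS(\tau\cup\sigma_0\cup\tau^{\ast})$, and the inclusion $T'\subseteq_{FS}T-\tau'$ needs to be verified by inspecting how $(T-\sigma_0)-\tau^{\ast}$ relates to $T-(\sigma_0\cup\tau^{\ast})$, possibly after thinning $T^{\ast}$ to ensure each element has a representative in $T-\tau'$. If instead every $\sigma_0\in S$ produces the first alternative with some $T^{\ast}_{\sigma_0}$ of height $\geq\beta_0$, fix one $\sigma_0\in S$ and form $T':=T^{\ast}_{\sigma_0}+\sigma_0$, after first refining $T^{\ast}_{\sigma_0}$ to a genuine subtree of $T-\sigma_0$ (this is possible because $FS(T^{\ast}_{\sigma_0})\subseteq FS(T-\sigma_0)$ and every element of $T-\sigma_0$ lies set-theoretically above $\sigma_0$). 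One then checks $ht(T')\geq\beta_0+1$, while the color-preservation property for $FS_{\geq 2}(T'+\tau)=FS_{\geq 2}(T^{\ast}_{\sigma_0}+(\tau\cup\sigma_0))$ is exactly the IH.

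The limit case $\beta=\sup_n\beta_n$ reduces to applying the IH at each $\beta_n$: if some $\beta_n$ triggers the second alternative, that witness already discharges Case B of the main statement (whose height requirement $\alpha$ does not depend on $n$); otherwise the first alternative produces $T^{\ast}_n$ of height $\geq\beta_n$ for every $n$, and a nested/diagonal choice produces $T'\subseteq_{FS}T$ of height $\geq\beta$ preserving the color-preservation property. The main obstacle throughout is bookkeeping around $\subseteq_{FS}$, which is strictly weaker than set-theoretic subtree containment: both successor-step constructions---$\tau':=\sigma_0\cup\tau^{\ast}$ (needing $T^{\ast}\subseteq_{FS}T-\tau'$) and $T':=T^{\ast}_{\sigma_0}+\sigma_0$ (needing $\sigma_0<T^{\ast}_{\sigma_0}$ set-theoretically)---as well as the limit-step diagonal merge, call for passing from an $FS$-witness to a literal subtree of the intended target while preserving height and the color condition.
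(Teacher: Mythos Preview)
Your proposal is correct and follows essentially the same transfinite induction on $\beta$ as the paper. The only differences are presentational: in the successor step the paper first tests whether the pair $(\tau'=\emptyset,\,T_{\geq\alpha\cdot\beta})$ already satisfies the second alternative, and if not extracts a single $k$ with $ht(T-\{k\})\geq\alpha\cdot\beta$ and applies IH once to $(T-\{k\},\sigma,\tau\cup\{k\})$; your case split ``either some $\sigma_0\in S$ gives the second alternative, or all give the first'' is logically equivalent but only ever uses one $\sigma_0$, so the quantification over all of $S$ is unnecessary. In the limit step the paper simply sets $T':=\bigcup_n T'_n$, which is already a tree with the required color property (since any $m\in FS_{\geq2}(T'+\tau)$ comes from a single $T'_n$), so the nested/diagonal merge you worry about is not needed. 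The $\subseteq_{FS}$ bookkeeping issues you flag are real but are handled identically (i.e., glossed over) in the paper; in particular the identity $(T^\ast+\{k\})+\tau=T^\ast+(\tau\cup\{k\})$ resolves the successor-step verification, and $(T-\sigma_0)-\tau^\ast$ agrees with $T-(\sigma_0\cup\tau^\ast)$ at the level of $FS$.
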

\begin{proof}
  By induction on $\beta$.  Let $\alpha,T,\sigma,\tau,c$ be given.  If $\beta=0$, the claim is trivial.  In the limit case $\beta=\sup_n\beta_n$, suppose the first case holds for every $n$.  That is, for each $n$, there is a $T'_n\subseteq_{FS}T$ with $ht(T'_n)\geq\beta_n$ and whenever $m\in FS_{\geq 2}(T'_n+\tau)$, there is an $m'\in FS(\sigma)$ such that $c(m)=c(m+m')$.  Then we may set $T':=\bigcup_nT'_n$, and $T'$ will satisfy the first case in the statement.  Otherwise the second case holds.

Suppose the claim holds for $\beta$ and $ht(T)\geq\alpha\cdot(\beta+1)=\alpha\cdot\beta+\alpha$.  Consider the tree $T_{\geq\alpha\cdot \beta}:=\{\sigma\in T\mid ht(T-\sigma)\geq\alpha\cdot\beta\}$.  Then $ht(T_{\geq\alpha\cdot\beta})\geq\alpha$.  If $\tau'=\emptyset, T'=T_{\geq\alpha\cdot\beta}$ satisfies the second condition, we are done.  Otherwise, there is a $k\in FS(T_{\geq\alpha\cdot\beta})$ such that for every $m\in FS(\tau)$ there is an $m'\in FS(\sigma)$ such that $c(k+m)=c(k+m+m')$.

Applying IH to $T-\{k\},\sigma,\tau\cup\{k\}$, if there is a $T'\subseteq_{FS}T-\{k\}$ with $ht(T')\geq\beta$ and such that whenever $m\in FS_{\geq 2}(T'+\tau\cup\{k\})$, there is an $n\in FS(\sigma)$ such that $c(m)=c(m+n)$, then $ht(T'+\{k\})\geq\beta+1$ and therefore satisfies the first condition.

On the other hand, if there is a $\tau'\in T-\{k\}$ and a $T'\subseteq_{FS}T-\{k\}-\tau'$ such that $ht(T')\geq\alpha$ and for every $n\in FS(T')$, there is an $m\in FS(\tau\cup\tau'\cup\{k\})$ such that for every $m'\in FS(\sigma)$, $c(n+m)\neq c(n+m+m')$, then $\tau'\cup\{k\}$ and $T'$ satisfy the second condition above.
\end{proof}

\begin{lemma}
  Let $\sigma<T$, with $ht(T)\geq\alpha\cdot 2\cdot\beta$, and $c$ a coloring of $T+\tau+\sigma$ be given.  Then one of the following holds:
  \begin{itemize}
  \item There is a $T'\subseteq_{FS}T$, $ht(T')\geq\beta$ such that whenever $m\in FS(T')$, there is an $n\in FS(\sigma)$ such that $c(m)=c(m+n)$.
  \item There is a $\tau'\in FS(T)$ and a $T'\subseteq_{FS}T-\tau'$ such that $ht(T')\geq\alpha$ and for every $n\in FS(T')$, there is an $m\in FS(\tau')$ such that for every $m'\in FS(\sigma)$, $c(n+m)\neq c(n+m+m')$.
  \end{itemize}
\end{lemma}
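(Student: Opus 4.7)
I would prove this by induction on $\beta$, mirroring the structure of the previous lemma. The base case $\beta=0$ is trivial and the limit case is handled by a union, so everything happens in the successor step $\beta=\beta_0+1$.

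The key insight is that the doubled budget $ht(T)\geq\alpha\cdot 2\cdot\beta$ lets me apply the previous lemma with its ``$\beta$'' parameter inflated to $2\beta$. That application returns either its second alternative --- in which case, setting $\tau':=\tau\cup\tau''$ (where $\tau''$ is the set produced), I obtain case 2 of the current lemma, modulo the small observation that $\tau<T$ lets one pass from $T-\tau''$ to $T-(\tau\cup\tau'')$ without loss of height --- or else a subtree $T_1\subseteq_{FS}T$ of height $\geq 2\beta$ satisfying the $FS_{\geq 2}(T_1+\tau)$-version of the monochromatic shift property.

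In the second case I would establish the following auxiliary claim by a separate induction on $\beta$: if $ht(T_1)\geq 2\beta$ then there is a subtree $T'$ of height $\geq\beta$ with $FS(T')\subseteq FS_{\geq 2}(T_1)$. At the successor stage $\beta_0+1$, the note following the definition of $ht$ gives $ht((T_1)_{\geq 2\beta_0})\geq 2$, which produces a $\prec$-chain $\sigma_1\prec\sigma_2$ with both elements in $(T_1)_{\geq 2\beta_0}$; by the definition of $\prec$ one has $\sigma_2\subsetneq\sigma_1$, so $\sigma:=\sigma_1$ satisfies $|\sigma|\geq 2$ and $ht(T_1-\sigma)\geq 2\beta_0$. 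Applying the inductive hypothesis to $T_1-\sigma$ yields $T'_0$ of height $\geq\beta_0$ with $FS(T'_0)\subseteq FS_{\geq 2}(T_1-\sigma)\subseteq FS_{\geq 2}(T_1)$. Setting $l:=\sum\sigma\in FS_{\geq 2}(T_1)$, the tree $T':=T'_0+\{l\}$ has height $\geq\beta_0+1$; the side condition $\{l\}<T'_0$ holds because $\sigma<T_1-\sigma$ places every element of $FS(T'_0)\subseteq FS(T_1-\sigma)$ strictly above $l$, and the $FS_{\geq 2}(T_1)$-membership of the cross-terms $l+m$ ($m\in FS(T'_0)$) follows because $\sigma\cup\tau'\in T_1$ for each $\tau'\subseteq\rho\in T_1-\sigma$.

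Since $FS(T')\subseteq FS_{\geq 2}(T_1)\subseteq FS_{\geq 2}(T_1+\tau)$, the shift property from the first step furnishes, for each $m\in FS(T')$, an $n\in FS(\sigma)$ with $c(m)=c(m+n)$, which is case 1. The main obstacle is the auxiliary induction: the operation $T\mapsto T-\sigma$ does not iterate cleanly, since $(T-\sigma)-\rho$ is typically strictly larger than $T-(\sigma\cup\rho)$. Extracting the whole pair $\sigma$ in one shot --- via the $\prec$-chain inside $(T_1)_{\geq 2\beta_0}$ --- is precisely what sidesteps this issue and makes the doubling work.
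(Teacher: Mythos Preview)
Your overall strategy matches the paper's: apply the previous lemma once with its height parameter set to $2\beta$, and in the first alternative pass from $FS_{\geq 2}$ to $FS$ by halving the height. Two points of divergence are worth noting. First, the paper takes $\tau=\emptyset$ in that application (the $\tau$ in the statement is vestigial), so its second alternative transfers verbatim; your decision to carry the ambient $\tau$ along requires the unstated hypothesis $\sigma<\tau<T$ and produces $\tau'=\tau\cup\tau''$, which need not lie in $T$ as the conclusion demands. Second, and more substantively, the paper performs the halving by a single explicit pairing construction --- declare $\rho\in T''$ iff $\rho=\{n_1+n_2,\ldots,n_{2k-1}+n_{2k}\}$ for some $\{n_1<\cdots<n_{2k}\}\in T'$ --- rather than by your auxiliary transfinite induction. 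Both are correct: the pairing is one line and avoids the $\prec$-chain extraction, while your argument has the virtue of making the height bound $ht(T'')\geq\beta$ fully explicit (the paper leaves this to the reader). Finally, your outer induction on $\beta$ for the lemma itself is never invoked and should be dropped; a single call to the previous lemma with parameter $2\beta$ already handles all $\beta$ at once.
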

\begin{proof}
  We may apply the previous lemma to $\sigma,\emptyset,T,\alpha,2\cdot\beta$.  The second case is identical to the second case here.  In the first case, we obtain $T'\subseteq_{FS}T$ with $ht(T')\geq 2\cdot\beta$ such that whenever $m\in FS_{\geq 2}(T')$, there is an $n\in FS(\sigma)$ such that $c(m)=c(m+n)$.  Define $T''\subseteq_{FS}T'$ by $\sigma\in T''$ if there is a $\tau\in T'$, $\tau=\{n_1<n_2<\ldots<n_{2k-1}<n_{2k}\}$, with $\sigma=\{n_1+n_2,\ldots,n_{2k-1}+n_{2k}\}$.
\end{proof}

\begin{lemma}
  Let $ht(T)\geq (2\alpha)^{r+1}+1$ and $c$ an $r$-coloring of $T$.  There is a $\sigma\in T$ and a $T'\subseteq_{FS}T-\sigma$, $ht(T')\geq\alpha$, such that for every $n\in FS(T')$ there is an $m\in FS(\sigma)$ such that $c(n)=c(n+m)$.
\end{lemma}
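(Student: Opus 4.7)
The plan is to iterate the previous lemma, building up the witness $\sigma$ incrementally and exploiting the fact that each case-(b) outcome lets us witness one more distinct color. Setting $\sigma_0 := \emptyset$ and $T_0 := T$, at each step $i = 1, \ldots, r+1$ I apply the previous lemma to $\sigma_{i-1}, T_{i-1}$ with $\beta$-parameter $\alpha$ (so case~(a) would produce a subtree of height $\geq \alpha$) and $\alpha$-parameter $(2\alpha)^{r+1-i}$, tuned so that the height left over by case~(b) matches the requirement of the next step. Thus the input height at step~$i$ (for $i \geq 2$) is $(2\alpha)^{r+2-i}$; at step~$1$, since $FS(\sigma_0) = \emptyset$ makes case~(a) vacuous, I weaken the $\beta$-parameter to $1$, reducing the requirement to $(2\alpha)^r \cdot 2 \leq (2\alpha)^{r+1}$, which fits inside $ht(T) \geq (2\alpha)^{r+1}+1$.

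If case~(a) holds at some step~$i$, then $\sigma := \sigma_{i-1}$ and $T' := T_i$ (of height $\geq \alpha$) give the conclusion directly. Otherwise case~(b) gives $\tau'_i, T_i$, and I set $\sigma_i := \sigma_{i-1} \cup \tau'_i$. To see that case~(a) must occur by step~$r+1$, I prove by induction on $k$ that if all of steps $1, \ldots, k$ fall in case~(b), then for every $n \in FS(T_k)$ there exist $m_j \in FS(\tau'_j)$ ($j = 1, \ldots, k$) such that the partial sums $s_j := n + m_k + m_{k-1} + \cdots + m_j$ yield pairwise distinct values $c(s_1), \ldots, c(s_k)$. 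The inductive step invokes the case-(b) guarantee at step~$k{+}1$ on $n \in FS(T_{k+1})$ to produce $m_{k+1}$ with $c(s_{k+1}) \neq c(s_{k+1} + m')$ for every $m' \in FS(\sigma_k) = FS(\tau'_1 \cup \cdots \cup \tau'_k)$, and then applies the IH to $s_{k+1} = n + m_{k+1} \in FS(T_k)$ to obtain $m_k, \ldots, m_1$ making $c(s_1), \ldots, c(s_k)$ distinct; since $s_j = s_{k+1} + (m_k + \cdots + m_j)$ with $m_k + \cdots + m_j \in FS(\sigma_k)$, the value $c(s_{k+1})$ differs from each $c(s_j)$, so all $k+1$ are distinct. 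With only $r$ colors available, $k \leq r$, so step~$r+1$ is forced into case~(a).

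The main subtle point is the nested bookkeeping: verifying that $\tau'_1 < \tau'_2 < \cdots$ (which follows from $T_i \subseteq_{FS} T_{i-1} - \tau'_i$), that $s_{k+1} = n + m_{k+1}$ really lies in $FS(T_k)$ (by tracing the $\subseteq_{FS}$ chain and the definition of $T - \tau'$), and that each $m_k + \cdots + m_j$ genuinely belongs to $FS(\sigma_k)$ because of the ordering of the $\tau'_\ell$. These are routine consequences of the definitions of $T - \sigma$ and the $\subseteq_{FS}$ relation, but they are the spots where the pigeonhole-on-colors argument needs to be supported with careful tree-level bookkeeping.
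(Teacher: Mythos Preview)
Your proposal is correct and follows essentially the same strategy as the paper: iterate the previous lemma, accumulating the $\tau'_i$ into a growing $\sigma$, and use pigeonhole on the $r$ colors to terminate. The only organizational differences are that the paper extracts the first piece $\sigma_1=\{d\}$ by hand (using the ``$+1$'' in the height hypothesis) rather than via an application of the lemma with empty $\sigma$, and that after $r$ case-(b) outcomes the paper argues directly that the $r$ partial sums $n+\sum_{j=k}^{r} m_j$ already exhaust all colors (so one matches $c(n)$), whereas you run one more step and derive a contradiction from $r{+}1$ distinct colors; these are equivalent finishes, and your height bookkeeping matches up correctly.
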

\begin{proof}
  We apply the previous lemma repeatedly: let $T_0:=T$, let $\sigma_1=\{d\}$ where $ht(T-\{d\})\geq (2\alpha)^r$, and given $\sigma_{i+1}$, let $T_{i+1}:=T_i-\sigma_{i+1}$.  Given $\sigma_{i+1},T_{i+1}$ apply the previous lemma to $\bigcup_{j\leq i+1}\sigma_{j},T_{i+1},\alpha,\alpha^{r+1-i}$.  If the first case holds, we have a $T'$ with the desired property.

  Suppose we construct $\sigma_1,\ldots,\sigma_r$ with the second case in the previous lemma holding at each step.  Then we have $ht(T_{r+1})\geq\alpha$, and for each $n\in FS(T_{r+1})$, each $k< r$, and each $m\in FS(\bigcup_{j=k+1}^r\sigma_j)$, there is an $m'\in FS(\sigma_k)$ such that for every $m''\in FS(\bigcup_{j=1}^{k-1}\sigma_j)$, $c(n+m+m')\neq c(n+m+m'+m'')$.  In particular, we obtain a sequence $m_r,\ldots,m_1$ with each $m_i\in FS(\sigma_i)$ such that $c(n+\sum_{j=k}^rm_j)\neq c(n+\sum_{j=k'}^rm_j)$ whenever $k\neq k'$.  In particular, there must be some $k$ such that $c(n)=c(n+\sum_{j=k}^rm_j)$.  Since $\sum_{j=k}^rm_j\in FS(\bigcup_{j=1}^r\sigma_j)$, it follows that for every $n\in FS(T_{r+1})$, there is an $m\in FS(\bigcup_{j=1}^r\sigma_j)$ such that $c(n)=c(n+m)$.
\end{proof}

\begin{definition}
  If $\sigma<T$ and $c$ is a coloring of $FS(T+\sigma)$, we say $\sigma$ \emph{half-matches} $T$ if for every $n\in FS(T)$ there is an $m$ in $FS(\sigma)$ such that $c(n)=c(n+m)$.  We say $\sigma$ \emph{full-matches} $T$ if for every $n\in FS(T)$ there is an $m$ in $FS(\sigma)$ such that $c(n)=c(n+m)=c(m)$.

If $\sigma$ half-matches $T$, we define the induced coloring $c_\sigma$ by $c_\sigma(n)=(m,c(n))$ where $m\in FS(\sigma)$ is least such that $c(n)=c(n+m)$.  If $\sigma$ full-matches $T$, we define the induced coloring $c_{s,\sigma}$ by $c_{s,\sigma}(n)=(m,c(n))$ where $m\in FS(\sigma)$ is least such that $c(n)=c(n+m)=c(m)$.

  If $T$ is a tree, $\alpha$ is an ordinal, and $c$ is a coloring of $FS(T)$, we define inductively the $\beta$-\emph{half-matching height} with respect to $c$ and base $\beta$, $m_\beta(c)\mhyphen{}ht(T)$, by:
  \begin{itemize}
  \item $m_\beta(c)\mhyphen{}ht(T)\geq 0$ if $ht(T)\geq\beta$
  \item $m_\beta(c)\mhyphen{}ht(T)\geq\alpha+1$ if there is a $\sigma\in T$ and a $T'\subseteq_{FS}T-\sigma$ such that $\sigma$ half-matches $T'$ and $m_\beta(c_\sigma)\mhyphen{}ht(T')\geq\alpha$
  \item $m_\beta(c)\mhyphen{}ht(T)\geq\lambda$ iff $m_\beta(c)\mhyphen{}ht(T)\geq\alpha$ for all $\alpha<\lambda$.
  \end{itemize}

We define inductively the $\beta$-\emph{full-matching height} with respect to $c$, $fm(c)\mhyphen{}ht(T)$, by:
  \begin{itemize}
  \item $fm(c)\mhyphen{}ht(T)\geq 0$
  \item $fm(c)\mhyphen{}ht(T)\geq\alpha+1$ if there is a $\sigma\in T$ and a $T'\subseteq_{FS}T-\sigma$ such that $\sigma$ full-matches $T'$ and $fm(c_{s,\sigma})\mhyphen{}ht(T')\geq\alpha$
  \item $fm(c)\mhyphen{}ht(T)\geq\lambda$ iff $fm(c)\mhyphen{}ht(T)\geq\alpha$ for all $\alpha<\lambda$.
  \end{itemize}
\end{definition}

We now introduce the first of three {\em ad hoc} rapidly growing functions on ordinals.
\begin{definition}
  We define $\beta_\alpha$ inductively by:
  \begin{itemize}
  \item $\beta_0:=\max\{\beta,2\}$
  \item $\beta_{\alpha+1}:=(\beta_\alpha)^\omega$
  \item $\beta_\lambda:=\sup_{\alpha<\lambda}\beta_\alpha$.
  \end{itemize}
\end{definition}
The requirement that $\beta_0\geq 2$ is to prevent the definition from becoming degenerate.  For $\beta\geq 2$, $\beta_\alpha$ is the result of raising $\beta$ to the power $\omega$ iterated $\alpha$ times.  For example, for any $\beta<\epsilon_0$, $\beta_\omega=\epsilon_0$.  (The ordinals denoted $\epsilon_\delta$ are those ordinals greater than $1$ such that $\gamma<\epsilon_\delta$ implies $\omega^\gamma<\epsilon_\delta$, with $\epsilon_0$ being the first such ordinal.)  More generally, if $\gamma$ is the $\delta$-th limit ordinal and $\beta<\epsilon_0$ then $\beta_\gamma=\epsilon_\delta$.

\begin{lemma}
  Let $ht(T)\geq \beta_\alpha$ and $c$ an $r$-coloring of $T$.  There is a $T'\subseteq_{FS}T$ such that $m_\beta(c)\mhyphen{}ht(T')\geq\alpha$.
\end{lemma}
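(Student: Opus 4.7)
My plan is to proceed by transfinite induction on $\alpha$, with each clause in the definition of $\beta_\alpha$ matching the corresponding clause of $m_\beta(c)\mhyphen{}ht$. The base case $\alpha=0$ is immediate: take $T'=T$, whose height is at least $\beta_0\geq\beta$, so $m_\beta(c)\mhyphen{}ht(T')\geq 0$ by definition. In the limit case $\alpha=\lambda$, the equality $\beta_\lambda=\sup_{\alpha<\lambda}\beta_\alpha$ gives $ht(T)\geq\beta_\alpha$ for every $\alpha<\lambda$, so the inductive hypothesis yields trees $T_\alpha\subseteq_{FS} T$ with $m_\beta(c)\mhyphen{}ht(T_\alpha)\geq\alpha$; I would set $T':=\bigcup_{\alpha<\lambda} T_\alpha$, which is again a tree and is $\subseteq_{FS} T$. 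To conclude I need monotonicity of $m_\beta(c)\mhyphen{}ht$ under tree inclusion, which follows by a short induction on the matching height itself: any $\sigma$ and $S'\subseteq_{FS}S-\sigma$ witnessing $m_\beta(c)\mhyphen{}ht(S)\geq\alpha'+1$ transfer verbatim to $T^*\supseteq S$, since $S-\sigma\subseteq T^*-\sigma$.

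The successor step $\alpha\mapsto\alpha+1$ is the heart of the proof. Given $ht(T)\geq\beta_{\alpha+1}=(\beta_\alpha)^\omega$, I would first apply the previous lemma with its ``$\alpha$'' replaced by $\beta_\alpha$. Its hypothesis $ht(T)\geq (2\beta_\alpha)^{r+1}+1$ is satisfied because $(\beta_\alpha)^\omega$ dominates any finite power of $\beta_\alpha$, regardless of whether $\beta_\alpha$ is finite or infinite. The lemma produces $\sigma\in T$ and $T_0\subseteq_{FS} T-\sigma$ with $ht(T_0)\geq\beta_\alpha$ such that $\sigma$ half-matches $T_0$. The induced coloring $c_\sigma$ is an $r'$-coloring of $FS(T_0)$ with $r'\leq r(2^{|\sigma|}-1)$, and the inductive hypothesis applied with ordinal $\alpha$, the same $\beta$, and color count $r'$ gives $T_1\subseteq_{FS} T_0$ with $m_\beta(c_\sigma)\mhyphen{}ht(T_1)\geq\alpha$. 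Setting $T':=T_1+\sigma$, one verifies that $\sigma\in T'$; that $T_1\subseteq_{FS} T'-\sigma$ (immediate from the definitions of $T+\sigma$ and $T-\sigma$); that $\sigma$ still half-matches $T_1$ since $FS(T_1)\subseteq FS(T_0)$; and that $T'\subseteq_{FS} T$ by rewriting each element of $FS(T_1+\sigma)$ as $a+b$ with $a\in FS(\sigma)\cup\{0\}$ and $b\in FS(T_1)\cup\{0\}$, locating some $\rho\in T-\sigma$ with $b=\sum_{k\in\rho}k$, and observing that $a+b$ is then a sum over a subset of $\sigma\cup\rho\in T$. The definition of $m_\beta(c)\mhyphen{}ht$ now gives $m_\beta(c)\mhyphen{}ht(T')\geq\alpha+1$.

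The principal obstacle is the ordinal calibration rather than any combinatorial subtlety. One has to choose $\beta_{\alpha+1}$ so that it absorbs both the $(2\cdot)^{r+1}+1$ demanded by the previous lemma \emph{and} the inflation $r\mapsto r(2^{|\sigma|}-1)$ in the number of colors at each recursive call, where $|\sigma|$ is not bounded in advance. The value $(\beta_\alpha)^\omega$ works precisely because it majorizes every finite power of $\beta_\alpha$, so the previous lemma's hypothesis is satisfied uniformly in $r$. The remaining set-theoretic verifications --- containment of $T'$ in $T$ under $\subseteq_{FS}$ and monotonicity of $m_\beta(c)\mhyphen{}ht$ under tree inclusion --- are routine unpackings of the definitions of $T-\sigma$, $T+\sigma$, and $\subseteq_{FS}$.
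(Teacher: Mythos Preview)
Your proof is correct and follows the same inductive scheme as the paper's: transfinite induction on $\alpha$, with the preceding half-matching lemma supplying the successor step. The paper's version is a bit cleaner because it observes that one may always take $T'=T$: then the limit case is immediate (no union, no monotonicity lemma needed), and in the successor step one concludes $m_\beta(c)\mhyphen{}ht(T)\geq\alpha+1$ directly from $\sigma\in T$ and $T''\subseteq_{FS}T-\sigma$, without ever forming $T_1+\sigma$ or verifying that it sits inside $T$ under $\subseteq_{FS}$.
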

\begin{proof}
  By induction on $\alpha$.  If $\alpha=0$, $ht(T)\geq\beta$, and therefore $m_\beta(c)\mhyphen{}ht(T)\geq 0$.  When $\alpha$ is a limit, the claim follows immediately from IH.  Suppose the claim holds for $\alpha$ and $ht(T)\geq\beta_{\alpha+1}$.  Applying the preceding lemma to $T$, we obtain a $\sigma\in T$ and a $T'\subseteq_{FS}T-\sigma$ such that $ht(T')\geq\beta_\alpha$ and for every $n\in FS(T')$, there is an $m\in FS(\sigma)$ such that $c(n)=c(n+m)$.  We may apply IH to $T',c_\sigma$ to obtain a $T''\subseteq_{FS}T'$ such that $m_\beta(c_\sigma)\mhyphen{}ht(T'')\geq\alpha$.  Therefore $m_\beta(c)\mhyphen{}ht(T)\geq\alpha+1$.
\end{proof}

\begin{lemma}
  Let $T$ be a tree with $ht(T)\geq\alpha$, let $r$ be an integer, and let $\bigcup_{i\leq r}A_i=T$ where whenever $\sigma\in A_i$ and $\tau\subseteq\sigma$, $\tau\in A_i$.  Then there is a $T'\subseteq T$ such that $ht(T')\geq\alpha$ and some $i$ such that $T'\subseteq A_i$.
\label{downclose}
\end{lemma}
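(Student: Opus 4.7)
I would argue by transfinite induction on $\alpha$, with the successor and limit cases handled by genuinely different arguments. The base case $\alpha = 0$ is trivial (take $T' = \emptyset$).

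For the successor case $\alpha = \gamma + 1$, no inductive hypothesis is required. Write $rk_T(\sigma)$ for the natural ordinal rank of $\sigma$ in the well-founded order $(T,\prec)$, so that $ht(T) = \sup_{\sigma \in T}(rk_T(\sigma) + 1)$. The hypothesis $ht(T) \geq \gamma + 1$ gives some $\sigma_0 \in T$ with $rk_T(\sigma_0) \geq \gamma$. Pick $i$ with $\sigma_0 \in A_i$ and set $T' := \{\tau \in T : \tau \subseteq \sigma_0\}$. The downward-closure of $A_i$ under non-empty subsets immediately gives $T' \subseteq A_i$. The heart of this case is to verify $rk_{T'}(\sigma_0) = rk_T(\sigma_0)$, which then forces $ht(T') \geq \gamma + 1$. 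This rests on two structural facts about $\prec$: whenever $\sigma_0 \prec \tau$ one automatically has $\tau \subseteq \sigma_0$, so every $\prec$-descendant of $\sigma_0$ in $T$ already lies in $T'$; and $\prec$ is transitive, so this inclusion cascades all the way down the $\prec$-cone below $\sigma_0$. A routine induction along rank then shows $rk_{T'}(\tau) = rk_T(\tau)$ for every $\tau$ in that cone, including $\sigma_0$ itself.

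For the limit case I use the inductive hypothesis together with pigeonhole on the $r$ colors. Since $\alpha < \omega_1$ is a limit it has cofinality $\omega$, so I fix $\beta_n \uparrow \alpha$. For each $n$, IH applied with $\beta_n$ in place of $\alpha$ yields some $i_n \leq r$ and a subtree $T'_n \subseteq A_{i_n}$ with $ht(T'_n) \geq \beta_n$. Because only finitely many colors are available, pigeonhole produces a single $i^* \leq r$ with $i_n = i^*$ for infinitely many $n$; the union of the corresponding $T'_n$'s is again a tree (an arbitrary union of downward-subset-closed families of non-empty sets is downward-subset-closed), is contained in $A_{i^*}$, and has height at least $\beta_n$ for each such $n$, hence at least $\alpha$.

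The main obstacle is the rank-preservation identity in the successor case: one must check that the restricted tree $T'$ does not prune any of $\sigma_0$'s $\prec$-descendants nor lower any of their internal ranks. Once the two observations $\sigma_0 \prec \tau \Rightarrow \tau \subseteq \sigma_0$ and transitivity of $\prec$ are isolated, the whole identity reduces to a routine induction along $\prec$-rank, and the rest of the lemma is bookkeeping.
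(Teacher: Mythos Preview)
Your successor case is built on a direction error for the rank recursion, and this makes the argument collapse. By definition $rk_T(\sigma_0)=\sup\{rk_T(\tau)+1:\tau\prec\sigma_0\}$, and $\tau\prec\sigma_0$ means $\sigma_0\subseteq\tau$ together with the ``extra room above'' condition; so the $\tau$'s that feed into $rk_T(\sigma_0)$ are the \emph{extensions} of $\sigma_0$, not its subsets. Your justification ``$\sigma_0\prec\tau\Rightarrow\tau\subseteq\sigma_0$'' is about the $\prec$-successors of $\sigma_0$, which are irrelevant to $rk_T(\sigma_0)$. With $T':=\{\tau\in T:\tau\subseteq\sigma_0\}$ there is no $\tau\in T'$ with $\tau\prec\sigma_0$ at all (that would force $\tau=\sigma_0$, and $\prec$ is irreflexive), so $rk_{T'}(\sigma_0)=0$. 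More bluntly, $T'$ is just the collection of non-empty subsets of the single finite set $\sigma_0$, hence $T'$ is finite and $ht(T')\leq|\sigma_0|<\omega$. No choice of $\sigma_0$ can give $ht(T')\geq\gamma+1$ once $\gamma\geq\omega$, so the argument cannot work without the inductive hypothesis.

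The paper's proof of the successor step genuinely uses IH: from $ht(T)\geq\alpha+1$ pick a singleton $\{n\}\in T$ with $ht(T-\{n\})\geq\alpha$, push the cover down to $T-\{n\}$ via $A'_i:=\{\sigma\in T-\{n\}:\sigma\cup\{n\}\in A_i\}$ (still downward-closed), apply IH to get $T'\subseteq A'_i$ with $ht(T')\geq\alpha$, and then return $T'+\{n\}\subseteq A_i$ with $ht(T'+\{n\})\geq\alpha+1$. Your limit case (pigeonhole on colors over a cofinal $\omega$-sequence and take the union) is fine and is exactly what the paper's one-line ``follows immediately from IH'' unpacks to.
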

Note that we do not require the sets $A_i$ to be disjoint, and the downwards closure property means that it may be necessary for some $\tau$ to belong to both $A_i$ and $A_j$ (namely, if there are $\sigma_i,\sigma_j\supseteq\tau$ with $\sigma_i\in A_i$ and $\sigma_j\in A_j$).
\begin{proof}
  By induction on $\alpha$.  When $\alpha=0$ this is trivial, and when $\alpha$ is a limit, this follows immediately from IH.  Suppose the claim holds for $\alpha$ and $ht(T)\geq\alpha+1$.  Choose $\{n\}\in T$ such that $ht(T-\{n\})\geq\alpha$ and define $A'_i:=\{\sigma\in T-\{n\}\mid \sigma\cup\{n\}\in A_i\}$.  Clearly $T-\{n\}=\bigcup_{i\leq r}A'_i$, so by IH, there is a $T'\subseteq T-\{n\}$ such that $ht(T')\geq\alpha$ and there is an $i$ such that $T'\subseteq A'_i$.  But this, together with the downwards closure of $A_i$, implies that $T'+\{n\}\subseteq A_i$.
\end{proof}

\begin{lemma}
  Let $T>\sigma_0$ be a tree and $c,c'$ colorings.  Define $T^*\subseteq_{FS}T$ by setting $\tau\in T^*$ iff there is an $\{n\}\in T-\tau$ such that $c'(n)=c'(n+m)$ for all $m\in FS(\tau)$ and there is no $m\in FS(\tau\cup\sigma_0)$ such that $c(n)=c(n+m)= c(m)$.  If $m_\beta(c')\mhyphen{}ht(T)\geq\alpha$ then either:
  \begin{itemize}
  \item There is a $\sigma\in T$ and a $T'\subseteq_{FS}T-\sigma$ such that $ht(T')\geq\beta$ and $\sigma$ full-matches $T'$ with respect to $c'$, or
  \item $ht(T^*)\geq\alpha$.
  \end{itemize}
\end{lemma}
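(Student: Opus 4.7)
We proceed by transfinite induction on $\alpha$, paralleling the recursive definition of $m_\beta\mhyphen{}ht$. The base case $\alpha=0$ gives $ht(T^*)\geq 0$ trivially, and the limit case is routine: either the first alternative is obtained at some $\alpha' < \alpha$ and we are done, or the second alternative yields $ht(T^*)\geq\alpha'$ for every $\alpha' < \alpha$, and hence at $\alpha$ itself.

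For the successor step $\alpha = \gamma+1$, we unfold $m_\beta(c')\mhyphen{}ht(T)\geq\gamma+1$ to produce $\sigma\in T$ and $T_1\subseteq_{FS}T-\sigma$ such that $\sigma$ $c'$-half-matches $T_1$ and $m_\beta(c'_\sigma)\mhyphen{}ht(T_1)\geq\gamma$. Since $c'_\sigma$ is a refinement of $c'$, any half-matching witness for $c'_\sigma$ is also one for $c'$, so $m_\beta(c')\mhyphen{}ht(T_1)\geq\gamma$ as well and we may apply the inductive hypothesis to $T_1$. If it delivers the first alternative---a $\sigma'$ together with a $T''\subseteq_{FS}T_1-\sigma'$ of height at least $\beta$ on which $\sigma'$ $c'$-full-matches---this transfers upwards to the first alternative for $T$, since $T_1\subseteq_{FS}T-\sigma$ situates the relevant witnesses within $T$.

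Otherwise $ht(T_1^*)\geq\gamma$, and we must lift this to $ht(T^*)\geq\gamma+1$. The plan is to exhibit a single $\{k\}\in T^*$ such that $T_1^*$ embeds, with no loss of height, into $T^*-\{k\}$. The candidate $k$ is drawn from $FS(\sigma)$: for each witness $n$ of an element of $T_1^*$, the half-matching property supplies some $m_n\in FS(\sigma)$ with $c'(n)=c'(n+m_n)$, and a pigeonhole over the finite set $FS(\sigma)$ (in the style of Lemma~\ref{downclose}) allows us to refine to a subtree of $T_1^*$ of the same height on which a common $k$ serves all witnesses simultaneously. Verifying $\{k\}\in T^*$ and $\tau\in T_1^*\Rightarrow\{k\}\cup\tau\in T^*$ then reduces to checking the $c$-non-full-matching clause for the enlarged set $\{k\}\cup\tau\cup\sigma_0$, using that $T_1^*$ already rules out $c$-full-matching against $\tau\cup\sigma_0$ and that the newly adjoined $k$ cannot supply a missing $c$-full-match without contradicting the standing assumption that the first alternative has failed. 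This last verification is the main obstacle: one must reconcile the two colorings $c,c'$, the background set $\sigma_0$, and the looser $\subseteq_{FS}$ relation in the same breath, and the delicate bookkeeping is what drives the step from $\gamma$ to $\gamma+1$.
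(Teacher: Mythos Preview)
Your successor step has a real gap, and it stems from the decision to weaken $m_\beta(c'_\sigma)\mhyphen{}ht(T_1)\geq\gamma$ to $m_\beta(c')\mhyphen{}ht(T_1)\geq\gamma$ before invoking the inductive hypothesis. By applying IH with the original parameters $(c,c',\sigma_0)$ you obtain a $T_1^*$ whose defining conditions are too weak to lift to $T^*$. Concretely, for $\tau\in T_1^*$ with witness $n$ you know $c'(n)=c'(n+m)$ for $m\in FS(\tau)$, and after pigeonholing on $FS(\sigma)$ you also know $c'(n)=c'(n+k)$; but nothing gives you $c'(n)=c'(n+k+m)$ for $m\in FS(\tau)$, which is required for $\{k\}\cup\tau\in T^*$. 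So the verification does \emph{not} reduce to the $c$-clause alone. And on the $c$-clause itself, $T_1^*$ only rules out $c$-full-matches with $m\in FS(\tau\cup\sigma_0)$, leaving the new elements $k$ and $k+m$ of $FS(\{k\}\cup\tau\cup\sigma_0)$ unaccounted for; your appeal to ``the first alternative has failed'' does not help here, since that alternative concerns $c'$-full-matching of a subtree of height $\geq\beta$, not a single $c$-full-match at a point (and $c\neq c'$ in general).

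The paper's remedy is to apply the inductive hypothesis with \emph{modified} parameters: replace $c'$ by $c'_\sigma$ and $\sigma_0$ by $\sigma\cup\sigma_0$. Then the analog $T_0^*$ of $T^*$ records $c'_\sigma(n)=c'_\sigma(n+m)$, whose first coordinate forces the \emph{same} half-match $k$ to work for $n$ and for every $n+m$, yielding $c'(n)=c'(n+k+m)$ automatically after the Lemma~\ref{downclose} refinement you describe. Likewise the non-full-match condition in $T_0^*$ already excludes all of $FS(\tau\cup\sigma\cup\sigma_0)\supseteq FS(\{k\}\cup\tau\cup\sigma_0)$, so no separate argument for the adjoined $k$ is needed. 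The pigeonhole on $k$ then proceeds exactly as you propose, but with these stronger hypotheses in hand the verification of $T'+\{k\}\subseteq T^*$ is immediate.
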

\begin{proof}
  By induction on $\alpha$.  When $\alpha=0$, the second condition holds trivially.  When $\alpha$ is a limit, either the second condition holds for all $\beta<\alpha$, and therefore for $\alpha$, or the first condition holds.  

  Suppose the claim holds for $\alpha$ and $m_\beta(c')\mhyphen{}ht(T)\geq\alpha+1$, and assume the first condition fails.  Choose $\sigma\in T$ so that $\sigma$ half-matches $T-\sigma$ and $m(c'_\sigma)\mhyphen{}ht(T-\sigma)\geq\alpha$.  Define $\tau\in T^*_0\subseteq_{FS}T-\sigma$ iff there is an $\{n\}\in T-\sigma-\tau$ such that $c'_\sigma(n)=c'_\sigma(n+m)$ for all $m\in FS(\tau)$ and there is no $m\in FS(\tau\cup\sigma\cup\sigma_0)$ with $c(n)=c(n+m)=c(m)$.  By IH, $ht(T^*_0)\geq\alpha$.

  For each $k\in FS(\sigma)$, let $A_k$ consist of those $\tau\in T^*_0$ such that there is some $n$ witnessing that $\tau$ belongs to $T^*_0$ such that $c'_\sigma(n)=(k,c'(n))$.  Clearly every $\tau\in T^*_0$ belongs to some $A_k$ and the $A_k$ are downwards closed, so by Lemma \ref{downclose} there must be a $T'\subseteq T^*_0$ such that $T'\subseteq A_k$ for some $k$ and $ht(T')\geq\alpha$.  For each $\tau\in T'$, since $c'_\sigma(n)=c'_\sigma(n+m)$ for each $m\in FS(\tau)$, it follows that $c'(n)=c'(n+m)$ for each $m\in FS(\tau\cup\{k\})$.  Since there is no $m\in FS(\tau\cup\sigma\cup\sigma_0)$ with $c(n)=c(n+m)=c(m)$, in particular there is no $m\in FS(\tau\cup\{k\}\cup\sigma_0)$ with $c(n)=c(n+m)=c(m)$.  Therefore $T'+\{k\}\subseteq T^*$, so $ht(T^*)\geq ht(T'+\{k\})\geq\alpha+1$.
\end{proof}

\begin{lemma}
  If $ht(T)\geq\beta_\alpha$ and $c$ is an $r$-coloring of $T$, either:
  \begin{itemize}
  \item There is a $\sigma\in T$ and a $T'\subseteq_{FS}T-\sigma$ such that $ht(T')\geq\beta$ and $\sigma$ full-matches $T'$, or
  \item There is a $T'\subseteq_{FS}T$ and an $i\leq r$ such that $ht(T')\geq\alpha$ and $c(m)\neq i$ for any $m\in FS(T')$.
  \end{itemize}
\label{splitty}
\end{lemma}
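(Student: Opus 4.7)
The plan is to chain the two immediately preceding lemmas. First, since $ht(T)\geq\beta_\alpha$, the penultimate lemma produces $T_0\subseteq_{FS}T$ with $m_\beta(c)\mhyphen{}ht(T_0)\geq\alpha$. Then I would apply the dichotomy lemma (the one defining $T^*$) with $c'=c$, $\sigma_0=\emptyset$, and $T_0$ in place of $T$; the hypothesis $T_0>\emptyset$ is vacuous. Its first alternative is verbatim the first alternative of the present lemma, so I may assume we are in the second case, which yields $T^*\subseteq_{FS}T_0$ of height $\geq\alpha$ such that every $\tau\in T^*$ admits a ``witness'' $\{n\}\in T_0-\tau$ with $c(n)=c(n+m)$ for all $m\in FS(\tau)$ and no $m\in FS(\tau)$ satisfying $c(m)=c(n+m)=c(n)$. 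Combining the two clauses shows $c(m)\neq c(n)$ for every $m\in FS(\tau)$, so the color $c(n)\in[1,r]$ is avoided locally on $FS(\tau)$.

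To globalize this $\tau$-by-$\tau$ avoidance to a single color missed on all of $FS(T')$ for some large $T'$, I partition $T^*$ by the color of the witness: for each $i\in[1,r]$, let $A_i$ consist of those $\tau\in T^*$ admitting a witness $n$ with $c(n)=i$. The $A_i$ cover $T^*$, and each $A_i$ is downward closed, since a witness $n$ for $\tau$ also witnesses every nonempty $\tau'\subseteq\tau$ at the same color: $\{n\}\in T_0-\tau'$ follows from $FS(\tau')\subseteq FS(\tau)$, and both the ``equal on shifts'' and ``no full match'' conditions for $\tau'$ are restrictions of those for $\tau$. Lemma \ref{downclose} then produces some $i$ and $T'\subseteq T^*$ of height $\geq\alpha$ with $T'\subseteq A_i$. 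For every $m\in FS(T')$ there is $\tau\in T'\subseteq A_i$ with $m\in FS(\tau)$, and the associated witness $n$ gives $c(n)=i$ while $c(m)\neq c(n)$, so $c(m)\neq i$. Thus $c$ avoids color $i$ on $FS(T')$, which is the second alternative of the lemma.

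The main obstacle is the careful verification that $A_i$ is downward closed, which turns on confirming that a single witness $n$ for $\tau$ continues to witness every nonempty $\tau'\subseteq\tau$ at the same color. Once this bookkeeping is in place, the proof amounts to chaining the two preceding lemmas with Lemma \ref{downclose}, and no further combinatorial input is needed.
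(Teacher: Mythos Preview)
Your proof is correct and follows essentially the same route as the paper's: obtain $T_0$ with $m_\beta(c)\mhyphen{}ht(T_0)\geq\alpha$, apply the $T^*$-dichotomy with $c'=c$ and $\sigma_0=\emptyset$, and in the second case partition $T^*$ into the color classes $A_i$ of the witness and invoke Lemma~\ref{downclose}. You actually supply more detail than the paper does, in particular the verification that a witness for $\tau$ remains a witness for every nonempty $\tau'\subseteq\tau$ (which is also what guarantees that $T^*$ itself is a tree).
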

\begin{proof}
  Given $T$, we find $T'\subseteq_{FS}T$ such that $m_\beta(c)\mhyphen{}ht(T')\geq\alpha$.  Applying the previous lemma with $\sigma_0=\emptyset$ and $c=c'$, if the first condition holds, we are done.  In the second case we obtain $T^*$ with $ht(T^*)\geq\alpha$ and for each $i\leq r$ let $A_i$ consist of those $\tau\in T^*$ such that there is some $\{n\}\in T'-\tau$ witnessing that $\tau\in T^*$ with $c(n)=i$.  Since the $A_i$ are downwards closed, there is a $T''\subseteq T^*$ such that $T''\subseteq A_i$ for some $i$ and $ht(T'')\geq\alpha$.

  For each $\tau\in T''$, there is an $\{n\}\in T'-\tau$ such that $i=c(n)=c(n+m)$ for each $m\in FS(\tau)$, and therefore $c(m)\neq i$.
\end{proof}

We introduce our second rapidly growing function on ordinals:
\begin{definition}
  \begin{itemize}
  \item $\beta_{\alpha,0}:=\beta$
  \item $\beta_{\alpha,n+1}:=\beta_{\beta_{\alpha,n}}$.
  \end{itemize}
\end{definition}
This function grows very quickly; for instance, $\omega_{\omega,2}=\omega_{\omega_{\omega,1}}=\omega_{\epsilon_0}=\epsilon_{\epsilon_0}$.


\begin{lemma}
  If $ht(T)\geq (\max\{\alpha,\beta\})_{\alpha\cdot(r-1)}$ and $c$ is an $r$-coloring of $T$, either:
  \begin{itemize}
  \item There is a $\sigma\in T$ and a $T'\subseteq_{FS}T-\sigma$ such that $ht(T')\geq\beta$ and $\sigma$ full-matches $T'$, or
  \item There is a $T'\subseteq_{FS}T$ such that $ht(T')\geq\alpha$ and $c$ is constant on $T'$.
  \end{itemize}
\end{lemma}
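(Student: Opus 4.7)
The plan is to proceed by induction on $r$. For the base case $r=1$ the bound $(\max\{\alpha,\beta\})_{\alpha\cdot 0}$ simplifies to (at least) $\alpha$, and any $1$-coloring is constant, so $T':=T$ immediately witnesses the second alternative.

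For the inductive step, write $\gamma:=\max\{\alpha,\beta\}$ and let $c$ be an $(r+1)$-coloring of a tree $T$ with $\mathrm{ht}(T)\geq\gamma_{\alpha\cdot r}$. The idea is to apply Lemma~\ref{splitty} a single time: keep the original $\beta$ and substitute the inductive bound $B:=\gamma_{\alpha\cdot(r-1)}$ in place of the ``$\alpha$'' of that lemma. The precondition $\mathrm{ht}(T)\geq\beta_{B}$ is exactly what the iterated growth function is designed to supply, since $\gamma\geq\beta$ implies $\gamma_\delta\geq\beta_\delta$ for every ordinal $\delta$ (by an easy transfinite induction on $\delta$), and the step from the bound at level $r$ to the bound at level $r+1$ corresponds to one further application of the outer operation $\delta\mapsto\gamma_\delta$, giving $\gamma_{\alpha\cdot r}\geq\gamma_B\geq\beta_B$.

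If Lemma~\ref{splitty} returns the full-match alternative, we have the first conclusion of the present lemma directly. Otherwise we obtain $T'\subseteq_{FS}T$ with $\mathrm{ht}(T')\geq B$ on which $c$ avoids some color $i\leq r+1$. The restriction of $c$ to $FS(T')$ therefore takes at most $r$ distinct values, and the inductive hypothesis applied to $T'$ with parameters $\alpha$, $\beta$, and $r$ colors returns either a full-matching pair $(\sigma,T'')$ with $\mathrm{ht}(T'')\geq\beta$ or a monochromatic $T''\subseteq_{FS}T'$ with $\mathrm{ht}(T'')\geq\alpha$. Since $T''\subseteq_{FS}T'\subseteq_{FS}T$, the same alternative holds for $T$.

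The main obstacle is the ordinal bookkeeping required to verify the recursive inequality $\gamma_{\alpha\cdot r}\geq\beta_{\gamma_{\alpha\cdot(r-1)}}$; once this is in hand, the remainder is the familiar Ramsey-style color-peeling induction in which each application of Lemma~\ref{splitty} trades one color for one additional level of the growth function, so that an $(r+1)$-coloring is collapsed to a single color after $r$ such trades.
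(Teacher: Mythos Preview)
Your overall plan—induct on $r$, apply Lemma~\ref{splitty} once to strip a color, then invoke the inductive hypothesis—is exactly the paper's strategy. The gap is in the ordinal bookkeeping, and it is a real one.

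The passage from the bound at level $r$ to the bound at level $r+1$ is \emph{not} an application of $\delta\mapsto\gamma_\delta$. By the additivity identity $\gamma_{\mu+\nu}=(\gamma_\mu)_\nu$ (an easy induction on $\nu$), one has
\[
\gamma_{\alpha\cdot r}\;=\;\gamma_{\alpha\cdot(r-1)+\alpha}\;=\;\bigl(\gamma_{\alpha\cdot(r-1)}\bigr)_\alpha\;=\;B_\alpha,
\]
so the step is the operation $\delta\mapsto\delta_\alpha$, not $\delta\mapsto\gamma_\delta$. Your claimed inequality $\gamma_{\alpha r}\geq\gamma_B$ would force $\alpha r\geq B=\gamma_{\alpha(r-1)}$, which fails already for tiny parameters: with $\alpha=\beta=2$ and $r=2$ one gets $B=2_2=\omega^\omega$ and $\gamma_{\alpha r}=2_4=\omega^{\omega^3}$, whereas $\gamma_B=2_{\omega^\omega}$ is vastly larger. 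In fact even the weaker inequality $\gamma_{\alpha r}\geq\beta_B$ that you actually need for your choice of parameters in Lemma~\ref{splitty} is false in general (take $\alpha=\omega$, $\beta=1$, $r=2$: then $\gamma_{\alpha r}=\omega_{\omega\cdot 2}$ while $\beta_B=2_{\omega_\omega}=\omega_{\omega_\omega}$), so the argument cannot be repaired merely by sharpening the intermediate estimate.

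The paper exploits the correct decomposition $\gamma_{\alpha r}=B_\alpha$ and feeds Lemma~\ref{splitty} with the opposite parameter assignment from yours: the lemma's $\beta$ is taken to be $B$ and the lemma's $\alpha$ is taken to be $\alpha$, so that the height hypothesis $B_\alpha$ is met exactly and the full-match branch already returns $ht(T')\geq B\geq\beta$.
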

\begin{proof}
  By induction on $r$.  When $r=1$, the second condition holds trivially.  If the claim holds for $r$ and $ht(T)\geq (\max\{\alpha,\beta\})_{\alpha\cdot r}=\left((\max\{\alpha,\beta\})_{\alpha\cdot(r-1)}\right)_{\alpha}$ then, by Lemma \ref{splitty}, either the first condition holds or there is a $T'\subseteq_{FS}T$ and an $i\leq r$ such that $ht(T')\geq(\max\{\alpha,\beta\})_{\alpha,r-1}$ and $c(m)\neq i$ for any $m\in FS(T')$.  We may reindex $c$ so that it is an $r-1$-coloring on $T'$, and then the claim follows by IH.
\end{proof}

The following is the final rapidly growing function on ordinals we need, since it will describe the final bound we obtain:
\begin{definition}
  \begin{itemize}
  \item $f(\alpha,0):=0$
  \item $f(\alpha,\beta+1):=\sup_n(f(\alpha,\beta))_{\alpha,n}$
  \item $f(\alpha,\lambda):=\sup_{\beta<\lambda}f(\alpha,\beta)$.
  \end{itemize}
\end{definition}

\begin{lemma}
  If $ht(T)\geq f(\alpha,\beta)$ and $c$ is an $r$-coloring of $T$ then either:
  \begin{itemize}
  \item There is a $T'\subseteq_{FS}T$ such that $fm(c)\mhyphen{}ht(T')\geq\beta$, or
  \item There is a $T'\subseteq_{FS}T$ such that $ht(T')\geq\alpha$ and $c$ is constant on $T'$.
  \end{itemize}
\label{final1}
\end{lemma}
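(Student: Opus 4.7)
The plan is to proceed by transfinite induction on $\beta$. The base case $\beta=0$ is immediate: one takes $T'=T$, since $fm(c)\mhyphen{}ht(T)\geq 0$ holds by definition.

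For the limit case $\beta=\lambda$, note that $f(\alpha,\lambda)\geq f(\alpha,\gamma)$ for every $\gamma<\lambda$, so the inductive hypothesis applies for each such $\gamma$. If the monochromatic alternative occurs for some $\gamma$, we are already done; otherwise we obtain $T_\gamma'\subseteq_{FS}T$ with $fm(c)\mhyphen{}ht(T_\gamma')\geq\gamma$ for every $\gamma<\lambda$. A brief preliminary induction shows that $fm(c)\mhyphen{}ht$ is monotone under $\subseteq$: if $T_1\subseteq T_2$, any witness $(\sigma,T_0)$ for the $fm$-height of $T_1$ is also a witness for $T_2$, since $\sigma\in T_2$ and $T_1-\sigma\subseteq T_2-\sigma$. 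Therefore $T':=\bigcup_{\gamma<\lambda}T_\gamma'$ is $\subseteq_{FS}T$ and satisfies $fm(c)\mhyphen{}ht(T')\geq\gamma$ for every $\gamma<\lambda$, hence $\geq\lambda$ by the limit clause of the definition.

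The successor case $\beta+1$ is the heart of the argument. Using the recurrence $\gamma_{\alpha,n+1}=\gamma_{\gamma_{\alpha,n}}$, a short calculation shows that for $n$ sufficiently large (depending on $r$) one has $(f(\alpha,\beta))_{\alpha,n}\geq(\max\{\alpha,f(\alpha,\beta)\})_{\alpha\cdot(r-1)}$, so the hypothesis $ht(T)\geq f(\alpha,\beta+1)$ suffices to apply the preceding lemma with ``monochromatic parameter'' $\alpha$ and ``full-matched-height parameter'' $f(\alpha,\beta)$. Its second alternative immediately gives what we want; otherwise we obtain $\sigma\in T$ and $T^*\subseteq_{FS}T-\sigma$ with $ht(T^*)\geq f(\alpha,\beta)$ such that $\sigma$ full-matches $T^*$ with respect to $c$. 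I then apply the inductive hypothesis to $T^*$ with the refined finite coloring $c_{s,\sigma}$ and parameters $\alpha,\beta$. If the monochromatic alternative holds, $c_{s,\sigma}$ is constant on some $T''\subseteq_{FS}T^*$ of height $\geq\alpha$; since $c$ is a projection of $c_{s,\sigma}$, $c$ is constant on $T''$ as well, and we are done. Otherwise there is $T''\subseteq_{FS}T^*$ with $fm(c_{s,\sigma})\mhyphen{}ht(T'')\geq\beta$, and setting $T':=T''+\sigma$ (well-defined since $\sigma<T''$) yields a tree with $\sigma\in T'$, $T''\subseteq_{FS}T'-\sigma$, $\sigma$ still full-matching $T''$, and $fm(c_{s,\sigma})\mhyphen{}ht(T'')\geq\beta$, whence $fm(c)\mhyphen{}ht(T')\geq\beta+1$ as required.

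The main obstacle is the ordinal arithmetic in the successor case: one must verify that the double-indexed bound $\sup_n(f(\alpha,\beta))_{\alpha,n}$ really dominates the single-indexed bound $(\max\{\alpha,f(\alpha,\beta)\})_{\alpha\cdot(r-1)}$ for every finite $r$, which rests on the fact that $\gamma_{\alpha,n+1}=\gamma_{\gamma_{\alpha,n}}$ iterates the already-very-fast single-subscript function with itself as subscript and so outruns any fixed finite iterate. A secondary bookkeeping concern is confirming that $T''+\sigma$ sits inside $T$ in the $\subseteq_{FS}$ sense, so that the chain $T^*\subseteq_{FS}T-\sigma$, $T''\subseteq_{FS}T^*$, $T':=T''+\sigma\subseteq_{FS}T$ threads through cleanly and the recursive definition of $fm\mhyphen{}ht$ is legitimately invoked at the final step.
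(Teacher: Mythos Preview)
Your proof is correct and follows exactly the same approach as the paper: transfinite induction on $\beta$, applying the preceding lemma in the successor step to obtain either a monochromatic subtree or a full-matching $\sigma$ with a subtree of height $\geq f(\alpha,\beta)$, and then invoking the inductive hypothesis with the refined coloring $c_{s,\sigma}$. You supply more detail than the paper does---the paper dismisses the limit case as ``immediate'' and does not comment on the ordinal-arithmetic inequality or the $T''+\sigma\subseteq_{FS}T$ bookkeeping you flag---but the structure is identical.
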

\begin{proof}
  By induction on $\beta$.  When $\beta=0$ the claim is trivial, and when $\beta$ is a limit, the claim follows immediately from IH.  So suppose the claim holds for $\beta$ and $ht(T)\geq f(\alpha,\beta+1)$.  By the previous lemma, either there is a $T'$ witnessing the second condition or there is a $\sigma\in T$ and a $T'\subseteq_{FS}T-\sigma$ such that $ht(T')\geq f(\alpha,\beta)$ and $\sigma$ full-matches $T'$.  In the latter case, IH applied to $T'$ and $c_{s,\sigma}$ gives either the second condition or a $T''$ such that $fm(c_{s,\sigma})\mhyphen{}ht(T'')\geq\beta$, and therefore $fm(c)\mhyphen{}ht(T''+\sigma)\geq\beta+1$.
\end{proof}

\begin{lemma}
  If $fm(c)\mhyphen{}ht(T)\geq\alpha$ then there is a $T'\subseteq_{FS}T$ such that $ht(T')\geq\alpha$ and for each $\tau\in T'$, $c$ is constant on $FS(\tau)$.
\end{lemma}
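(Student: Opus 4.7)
The plan is to proceed by transfinite induction on $\alpha$. The base case $\alpha = 0$ is trivial (take $T' = T$, with the monochromaticity condition being vacuous), and the limit case is also routine: if $fm(c)\mhyphen{}ht(T) \geq \lambda$, then by IH for each $\alpha < \lambda$ we get $T'_\alpha \subseteq_{FS} T$ of height $\geq \alpha$ with the desired property, and $T' := \bigcup_{\alpha < \lambda} T'_\alpha$ inherits both $\subseteq_{FS} T$ and the monochromaticity on $FS(\tau)$ for each $\tau \in T'$, while having height $\geq \lambda$.

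For the successor step, suppose $fm(c)\mhyphen{}ht(T) \geq \alpha+1$. Unpacking the definition, pick $\sigma \in T$ and $T_0 \subseteq_{FS} T-\sigma$ such that $\sigma$ full-matches $T_0$ and $fm(c_{s,\sigma})\mhyphen{}ht(T_0) \geq \alpha$. By IH applied to $(T_0, c_{s,\sigma})$, obtain $T'_0 \subseteq_{FS} T_0$ with $ht(T'_0) \geq \alpha$ such that $c_{s,\sigma}$ is constant on $FS(\tau)$ for each $\tau \in T'_0$. For each pair $(m, i)$ with $m \in FS(\sigma)$ and $i$ a color, let $A_{m,i}$ consist of those $\tau \in T'_0$ on which $c_{s,\sigma}$ is constantly equal to $(m, i)$. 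Since there are only finitely many such pairs and $A_{m,i}$ is closed under subsets (the constant value of $c_{s,\sigma}$ on $FS(\tau)$ is inherited by any subset), Lemma \ref{downclose} yields $T''_0 \subseteq T'_0$ of height $\geq \alpha$ that is contained in some fixed $A_{m^*, i^*}$.

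Now set $T' := T''_0 + \{m^*\}$. Since $m^* \in FS(\sigma)$ and $\sigma < T_0$, we have $\{m^*\} < T''_0$, so this sum is well-defined and $ht(T') \geq ht(T''_0) + 1 \geq \alpha + 1$. To check $T' \subseteq_{FS} T$, observe that writing $m^* = \sum_{k \in \sigma^*} k$ for some $\sigma^* \subseteq \sigma$, every new element of $T'$ has the form $\{m^*\} \cup \tau''$ for $\tau'' \subseteq \tau \in T''_0$, and its $FS$-set equals $FS(\sigma^* \cup \tau'')$ (which lies in $T$ by downward closure, since $\sigma \cup \tau \in T$). Finally, the monochromaticity holds on each $\tau' \in T'$: on $\tau'' \in T''_0$, $c$ is constant by inheritance from $c_{s,\sigma}$; on $\{m^*\} \cup \tau''$, the full-matching identities $c(n) = c(n + m^*) = c(m^*) = i^*$ hold uniformly for $n \in FS(\tau'')$, so $c$ takes the constant value $i^*$ on all of $FS(\{m^*\} \cup \tau'')$.

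The main obstacle is the successor case, specifically verifying that Lemma \ref{downclose} genuinely applies — which rests on the fact that constancy of $c_{s,\sigma}$ on $FS(\tau)$ (with value $(m,i)$) is preserved under taking subsets of $\tau$, giving the required downward closure — and then checking that the single-element extension $T''_0 + \{m^*\}$ simultaneously stays inside $T$ in the $FS$-sense, gains exactly one level of height, and remains $c$-constant on each of the new branches via the full-matching property.
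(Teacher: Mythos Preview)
Your proof is correct and follows essentially the same approach as the paper: induct on $\alpha$, in the successor step apply the inductive hypothesis to obtain a subtree on which $c_{s,\sigma}$ is constant on each branch, invoke Lemma~\ref{downclose} to pin down a single value (the paper partitions only by the $FS(\sigma)$-coordinate $k$, you by the full pair $(m,i)$---either works), and then extend by the resulting singleton. One minor correction: in verifying $T' \subseteq_{FS} T$ you assert ``$\sigma \cup \tau \in T$'' for $\tau \in T''_0$, but $T''_0 \subseteq_{FS} T_0 \subseteq_{FS} T-\sigma$ only gives $FS(\tau) \subseteq FS(T-\sigma)$, not $\tau \in T-\sigma$; the fix is to argue elementwise that each $m^* + n$ with $n \in FS(\tau'')$ lies in $FS(T)$ by picking $\rho \in T-\sigma$ with $n \in FS(\rho)$ and using $\sigma^* \cup \rho \subseteq \sigma \cup \rho \in T$.
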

\begin{proof}
  By induction on $\alpha$.  When $\alpha=0$ the claim is trivial, and when $\alpha$ is a limit, the claim follows immediately from IH.  So suppose the claim holds for $\alpha$ and $fm(c)\mhyphen{}ht(T)\geq\alpha+1$.  There is a $\sigma\in T$ such that $\sigma$ full-matches $T-\sigma$ and $fm(c_{s,\sigma})\mhyphen{}ht(T-\sigma)\geq\alpha$.  By IH, there is a $T'\subseteq_{FS}T-\sigma$ such that $ht(T')\geq\alpha$ and for each $\tau\in T'$, $FS(\tau)$ is monochromatic under $c_{s,\sigma}$.

Define $A_k$ to be those $\tau\in T'$ such that $c_{s,\sigma}(m)=(k,c(m))$ for each $m\in FS(\tau)$.  Then there is a $T''\subseteq T'$ with $ht(T'')\geq\alpha$ and $T''\subseteq A_k$ for some $k$.  Then $T''+\{k\}$ satisfies the claim.
\end{proof}

\begin{lemma}
  If $fm(c)\mhyphen{}ht(T)\geq\alpha$ then there is a $T'\subseteq_{FS}T$ such that $ht(T')\geq\alpha$ and $T$ is monochromatic under $c$.
\label{final2}
\end{lemma}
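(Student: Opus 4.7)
The statement we want is that $fm(c)\mhyphen{}ht(T)\geq\alpha$ yields a $T'\subseteq_{FS}T$ with $ht(T')\geq\alpha$ on which $c$ is genuinely monochromatic (not just constant on each $FS(\tau)$ separately). The plan is simply to invoke the previous lemma and then use Lemma \ref{downclose} to collapse the $r$ possible colors down to one, without losing any height.

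Concretely, I would first apply the preceding lemma to obtain $T^*\subseteq_{FS}T$ with $ht(T^*)\geq\alpha$ such that for every $\tau\in T^*$, the set $FS(\tau)$ is monochromatic under $c$. Next, for each color $i\in[1,r]$ define
\[A_i := \{\tau\in T^*\mid c\text{ takes the constant value }i\text{ on }FS(\tau)\}.\]
Every $\tau\in T^*$ lies in some $A_i$, so $T^* = \bigcup_{i\leq r}A_i$. Moreover the $A_i$ are downwards closed in the sense required by Lemma \ref{downclose}: if $\sigma\in A_i$ and $\emptyset\neq\tau\subseteq\sigma$ with $\tau\in T^*$, then $FS(\tau)\subseteq FS(\sigma)$, so $c$ takes the value $i$ throughout $FS(\tau)$ and hence $\tau\in A_i$.

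Now apply Lemma \ref{downclose} to $T^*$ with the covering $\bigcup_{i\leq r}A_i=T^*$. This produces $T'\subseteq T^*$ with $ht(T')\geq\alpha$ and some fixed $i$ such that $T'\subseteq A_i$. Since $T'\subseteq T^*\subseteq_{FS}T$, also $T'\subseteq_{FS}T$. For every $n\in FS(T')$, there is a $\tau\in T'\subseteq A_i$ with $n\in FS(\tau)$, and then $c(n)=i$. Hence $c$ is constantly $i$ on $FS(T')$, as required.

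I do not expect any real obstacle. The only mild subtlety is making sure the covering $\{A_i\}$ has the downwards-closure property needed by Lemma \ref{downclose} — but this is automatic from the fact that monochromaticity on $FS(\sigma)$ is inherited by $FS(\tau)$ for $\tau\subseteq\sigma$. Once that is in place, the height-preserving pigeonhole provided by Lemma \ref{downclose} does all the remaining work.
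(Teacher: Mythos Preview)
Your proposal is correct and follows essentially the same approach as the paper: apply the preceding lemma to get a tree on which each $FS(\tau)$ is monochromatic, partition by color into downwards-closed pieces $A_i$, and invoke Lemma~\ref{downclose} to find a full-height subtree in a single $A_i$. The paper's proof is terser and omits the explicit verification of downwards closure that you include, but the argument is the same.
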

\begin{proof}
  Apply the previous lemma to obtain $T'\subseteq_{FS}T$ such that $ht(T')\geq\alpha$ and for each $\tau\in T'$, $c$ is constant on $FS(\tau)$.  For $i\leq r$, define $A_i$ to be those $\tau\in T'$ such that $c$ is constantly equal to $i$ on $FS(\tau)$.  There is a $T''\subseteq T'$ such that $T''\subseteq A_i$ for some $i$ and $ht(T'')$; this $T''$ satisfies the claim.
\end{proof}

\begin{theorem}\label{OrdinalEffectiveHindman}
  If $ht(T)\geq f(\alpha,\alpha)$ and $c$ is an $r$-coloring of $T$ then there is a $T'\subseteq_{FS}T$ such that $ht(T')\geq\alpha$ and $c$ is constant on $T'$.
\end{theorem}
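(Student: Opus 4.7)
The plan is to derive this as an immediate corollary of the two preceding lemmas, which already contain all the combinatorial content. Given $T$ with $ht(T)\geq f(\alpha,\alpha)$ and an $r$-coloring $c$, I would first apply the penultimate lemma (Lemma \ref{final1}) to $T$, $c$, and $\beta=\alpha$. This produces one of two outcomes.

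In the first outcome, we are immediately handed a $T'\subseteq_{FS}T$ with $ht(T')\geq\alpha$ on which $c$ is constant, and there is nothing left to prove. In the second outcome, we obtain a $T'\subseteq_{FS}T$ with $fm(c)\mhyphen{}ht(T')\geq\alpha$, i.e.\ the full-matching height of $T'$ with respect to $c$ is at least $\alpha$. We then feed $T'$ into Lemma \ref{final2}, which converts full-matching height into genuine tree height: it produces a $T''\subseteq_{FS}T'$ with $ht(T'')\geq\alpha$ and $c$ monochromatic on $FS(T'')$. Since $\subseteq_{FS}$ is transitive, $T''\subseteq_{FS}T$, which is exactly what is required.

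There is effectively no obstacle; all the difficulty has already been absorbed into the chain of lemmas culminating in Lemma \ref{final1} (where the iteration of the $(\cdot)_{\alpha,n}$ construction inside $f(\alpha,\beta+1)$ is precisely tuned to repeatedly strip off colors) and Lemma \ref{final2} (where full-matching height is traded for honest tree height via the downward-closure argument of Lemma \ref{downclose}). The sole design choice is to take $\beta=\alpha$ in Lemma \ref{final1}; this is why the definition of $f(\alpha,\beta)$ is iterated in its second coordinate and then evaluated on the diagonal, giving the final bound $f(\alpha,\alpha)$ in the statement of the theorem.
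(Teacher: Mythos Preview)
Your proposal is correct and matches the paper's own proof essentially verbatim: apply Lemma~\ref{final1} with $\beta=\alpha$, and if the second alternative occurs, feed the resulting $T'$ into Lemma~\ref{final2}. There is nothing to add.
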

\begin{proof}
  By Lemma \ref{final1}, either the conclusion holds, or we may apply Lemma \ref{final2} to obtain the claim.
\end{proof}

\section{Further Developments}
Most Ramsey theoretic properties giving infinite sets will be approximated by transfinite constructions in a similar way.  For example, in the case of Ramsey's Theorem for pairs, we could define a set $A\subseteq[\mathbb{N}]^2$ to be $1$-Ramsey if it is non-empty, $\alpha+1$-Ramsey if there is an $n$ such that $\{\{m,m'\}\in A\mid \{n,m\}\in A \wedge \{n,m'\}\in A\}$ is $\alpha$-Ramsey, and $A$ is $\lambda$-Ramsey if for every $\beta<\lambda$, $A$ is $\beta$-Ramsey.  Then Ramsey's Theorem for pairs would be equivalent to the statement that for every $c$ and every countable $\alpha$, there exists a monochromatic $\alpha$-Ramsey set.  A closely related family of approximations for the pigeonhole principle has been studied by Tao \cite{Tao07} and Gaspar and Kohlenbach \cite{GK10}.

We hope that these approximations might give tractable fragments of open problems.  For instance, the following question is open:
\begin{question}
  Let $k\geq 2$.  Is there a set $D\subseteq\mathbb{N}$ such that whenever $D$ is finitely colored, there is an infinite $D'\subseteq D$ such that all sums of $\leq k$ elements of $D'$ belong to the same color, but such that $D$ is not an  $\IP_{k+1}$-set?
\end{question}
The finite version was shown in \cite{NR90}, but as far as we know, even the simplest transfinite analogs are open.  For instance, consider the following, the $\omega+1$ case of a weaker question asked in \cite{HLS03}:
\begin{question}
  Is there a set $D\subseteq\mathbb{N}$ which is not $\IP_3$, but such that whenever $D$ is finitely colored, there is a $d\in D$ and for every $n$, $b^n_1,\ldots,b^n_n\in D$, such that $d$, $b^n_i$ for all $n,i\leq n$, and all sums of the form $d+b^n_i$ or $b^n_i+b^n_j$ ($i\neq j$) are the same color?
\end{question}

A similar question is:
\begin{question}
  Is there a set $D\subseteq\mathbb{N}$ such that whenever $D$ is finitely colored, there is a $d\not\in D$ and for every $n$, a sequence $b^n_1,\ldots,b^n_n\not\in D$ such that $FS(d,b^n_1,\ldots,b^n_n)\setminus\{d,b^n_1,\ldots,b^n_n\}$ is monochromatic (and contained in $D$) for every $n$?
\end{question}
This is the $\omega+1$ version of a statement whose finite version is proven in \cite{Pro85} (with a simpler proof given in \cite{DHLS00}).

Finally, we note that the ordinal bounds given here are not necessarily optimal.  Indeed,  true ordinal bounds are closely related to reverse mathematical strength, which remains open for Hindman's Theorem.  The bounds given here are consistent with the upper bound on the strength of Hindman's Theorem given in \cite{blass87}.


\begin{thebibliography}{GRS80}

\bibitem[Bau74]{b74}
J.E.~Baumgartner.
\newblock A short proof of Hindman's theorem.
\newblock {\em J. Combinatorial Theory Ser. A} 17:384-386, 1974.

\bibitem[BHS87]{blass87}
A.R.~Blass, J.L.~Hirst, and S.G.~Simpson.
\newblock Logical analysis of some theorems of combinatorics and topological
  dynamics.
\newblock In {\em Logic and combinatorics ({A}rcata, {C}alif., 1985)},
  volume~65 of {\em Contemp. Math.}, pages 125--156. Amer. Math. Soc.,
  Providence, RI, 1987.

\bibitem[DHLS00]{DHLS00}
D.~Davenport, N.~Hindman, I.~Leader, D.~Strauss.
\newblock Continuous homomorphisms on {$\beta\mathbb{N}$} and Ramsey theory.
\newblock {\em New York J. Mathematics} 6:73-86, 2000.

\bibitem[GK10]{GK10}
J.~Gaspar and U.~Kohlenbach.
\newblock On Tao's ``finitary'' infinite pigeonhole principle.
\newblock {\em J. Symbolic Logic} 75:355-371, 2010.

\bibitem[GRS80]{GrRS80}
R.L.~Graham, B.L.~Rothschild, and J.H.~Spencer.
\newblock {\em Ramsey theory}.
\newblock John Wiley \& Sons Inc., New York, 1980.
\newblock Wiley-Interscience Series in Discrete Mathematics, A
  Wiley-Interscience Publication.
  
\bibitem[Hir04]{Hir04}
J.L.~Hirst.
\newblock Hindman's theorem, ultrafilters, and reverse mathematics.
\newblock {\em Hindman's theorem, ultrafilters, and reverse mathematics}, 69:65-72.

\bibitem[Hin74]{Hind74}
N.~Hindman.
\newblock Finite sums from sequences within cells of a partition of {$N$}.
\newblock {\em J. Combinatorial Theory Ser. A}, 17:1--11, 1974.

\bibitem[HLS03]{HLS03}
N.~Hindman, I.~Leader, and D.~Strauss.
\newblock Open problems in partition regularity.
\newblock {\em Combinatorics, Probability, and Computing}, 12:571-583, 2003.

\bibitem[Kec95]{Kech95}
A.S.~Kechris.
\newblock {\em Classical descriptive set theory}, volume 156 of {\em Graduate
  Texts in Mathematics}.
\newblock Springer-Verlag, New York, 1995.

\bibitem[NR90]{NR90}
J.~Ne\v{s}et\v{r}il and V.~R\"odl.
\newblock Partite construction and Ramsey space systems.
\newblock {\em Mathematics of Ramsey Theory}.
\newblock Springer-Verlag, 1990, pp. 98-112.

\bibitem[Pro85]{Pro85}
H.-J.~Pr\"omel.
\newblock Induced partition properties of combinatorial cubes.
\newblock {\em J. Combinatorial Theory  Ser. A}, 39:177-208, 1985.

\bibitem[Rad33]{Rado33}
R.~Rado.
\newblock Studien zur {K}ombinatorik.
\newblock {\em Math. Z.}, 36(1):424--470, 1933.

\bibitem[San69]{Sand69}
J.~Sanders.
\newblock A generalization of {S}chur's theorem.
\newblock {\em Dissertation, Yale University}, 1969.

\bibitem[Tao07]{Tao07}
T.~Tao.
\newblock Soft analysis, hard analysis, and the finite convergence principle.
\newblock http://terrytao.wordpress.com/2007/05/23/soft-analysis-hard-analysis-and-the-finite-convergence-principle/.

\bibitem[Tow09]{Tow09}
H.~Towsner.
\newblock A Simple Proof of Hindman's Theorem.
\newblock http://arxiv.org/abs/0906.3885.
\end{thebibliography}

\def\ocirc#1{\ifmmode\setbox0=\hbox{$#1$}\dimen0=\ht0 \advance\dimen0
  by1pt\rlap{\hbox to\wd0{\hss\raise\dimen0
  \hbox{\hskip.2em$\scriptscriptstyle\circ$}\hss}}#1\else {\accent"17 #1}\fi}

\end{document}